\newtheorem{thm}{Theorem}
\newtheorem{lemma}[thm]{Lemma}
\newtheorem{claim}{Claim}[thm]
\newtheorem{conj}[thm]{Conjecture}
\newtheorem{defn}[thm]{Definition}
{}
\newtheorem{theorem}[thm]{Theorem}
\newtheorem{example}[thm]{Example}
\newtheorem*{example*}{Example}
\newtheorem{remark}[thm]{Remark}
\newtheorem{observation}[thm]{Observation}
\newtheorem*{definition*}{Definition}
\newtheorem*{remark*}{Remark}
\newcommand*{\myproofname}{Proof}
\def\qed{\hfill\ifhmode\unskip\nobreak\fi\qquad\ifmmode\Box\else\hfill$\Box$\fi}
\title{On the $(1^2,2^4)$-packing edge-coloring of subcubic graphs}
\date{\today}
\author{
Xujun Liu\thanks{Department of Foundational Mathematics, Xi'an Jiaotong-Liverpool University, Suzhou, Jiangsu Province, 215123, China, xujun.liu@xjtlu.edu.cn; the research of X. Liu was supported by the Natural Science Foundation of the Jiangsu Higher Education Institutions of China (Grant No. 22KJB110025) and the Research Development Fund RDF-21-02-066 of Xi'an Jiaotong-Liverpool University.} \and
Gexin Yu\thanks{Department of Mathematics, William \& Mary, Williamsburg, VA, USA, gyu@wm.edu.}
 }
\begin{document}
	\maketitle

\begin{abstract}
An induced matching in a graph $G$ is a matching such that its end vertices also induce a matching.  A $(1^{\ell}, 2^k)$-packing edge-coloring of a graph $G$ is a partition of its edge set into disjoint unions of $\ell$ matchings and $k$ induced matchings. Gastineau and Togni (2019), as well as Hocquard, Lajou, and Lu\v zar (2022), have conjectured that every subcubic graph is $(1^2,2^4)$-packing edge-colorable. In this paper, we confirm that their conjecture  is true (for connected subcubic graphs with more than $70$ vertices). Our result is sharp due to the existence of subcubic graphs that are not $(1^2,2^3)$-packing edge-colorable.
\end{abstract}

\section{Introduction}

A proper $k$-edge-coloring of a graph $G$ is a partition of its edges into disjoint unions of $k$ matchings. The chromatic index of a graph $G$ is the minimum $k$ such that $G$ has a proper $k$-edge-coloring. By the famous theorem of Vizing~\cite{V1}, we know the chromatic index of a simple graph $G$ with maximum degree $\Delta$ is either $\Delta$ or $\Delta+1$, where the former class of graphs is called {\em Class I} and the latter is called {\em Class II}. A strong edge-coloring of a graph is an assignment of colors to the edge set such that every color class induces a matching. The strong chromatic index of a graph is the minimum number of colors needed to complete a strong edge-coloring. The notion of strong edge-coloring was first introduced by Fouquet and Jolivet~\cite{FJ1}, and later it was conjectured by Erd\H{o}s and Ne\v set\v ril~\cite{EN1} that the strong chromatic index of a graph $G$ is bounded by $\frac{5}{4} \Delta^2$ if $\Delta$ is even and $\frac{5}{4} \Delta^2 - \frac{1}{2} \Delta + \frac{1}{4}$ if $\Delta$ is odd. Many researchers have made progress toward this conjecture and it includes but not limited to~\cite{A1,CDYZ1, CKKR1, FKS1, LMSS1, MR1}.

For a non-decreasing sequence $(s_1, \ldots, s_k)$ of positive integers, an $S$-packing edge-coloring of a graph $G$ is a partition of its edge set $E(G)$ into $E_1, \ldots, E_k$ such that the distance between every pair of distinct edges $e_1,e_2 \in E_i$ is at least $s_i+1$, where the distance between edges is defined to be their corresponding vertex distance in the line graph of $G$. To simplify the notation, we denote repetitions of the same numbers in $S$ using exponents and omit the word ``packing'' in this paper. For example, $(1,1,2,2,2,2)$-packing edge-coloring is denoted by $(1^2,2^4)$-edge-coloring. 

By the definition of $S$-packing edge-coloring, a $(1^{\ell})$-edge-coloring is a proper edge coloring using $\ell$ colors and a $(2^k)$-edge-coloring is a strong edge-coloring using $k$ colors. The notion of $S$-packing edge-coloring was first generalized from its vertex counterpart (See~\cite{BKL1,BF1,BKRW1,FKL1,GT2}) by Gastineau and Togni~\cite{GT1}, who studied the $S$-packing edge-coloring of subcubic graphs with a prescribed number of $1$'s in the sequence. By Vizing's famous result~\cite{V1}, a cubic graph either has a $(1^3)$-edge-coloring or a $(1^4)$-edge-coloring. For strong edge-coloring, Andersen~\cite{A1} and independently Horak, Qing, and Trotter\cite{HQT1} proved that every subcubic graph has a $(2^{10})$-edge-coloring, confirming the conjecture of Erd\H{o}s and Ne\v set\v ril for $\Delta = 3$. For graphs $G$ with $\Delta(G) = 4$, Huang, Santana, and Yu~\cite{HSY1} proved that $G$ is $(2^{21})$-edge-colorable and it is only one color away from the conjectured bound of Erd\H{o}s and Ne\v set\v ril. For subcubic planar graphs, Kostochka, Li, Ruksasakchai, Santana, Wang, and Yu~\cite{KLRSWY1} showed that it is $(2^9)$-edge-colorable and it is sharp due to the existence of non-$(2^8)$-edge-colorable subcubic graphs. When $\Delta$ is large, the current best upper bound, $1.772 \Delta^2$, was proved by Hurley, de Joannis de Verclos, and Kang~\cite{HJK1} using the probabilistic method.

Payan~\cite{P1} proved that every subcubic graph admits a $(1^3,2)$-edge-coloring and it is also sharp as the Petersen graph is not $(1^3,3)$-edge-colorable. An immediate corollary from the result ``every subcubic graph is $(2^{10})$-edge-colorable'' is that every subcubic graph is $(1, 2^9)$-edge-colorable. Gastineau and Togni~\cite{GT1} found that there are subcubic graphs that are not $(1,2^6)$-edge-colorable and asked an open question ``Is it true that all cubic graphs are $(1,2^7)$-edge-colorable?''. They also conjectured that every subcubic graph is $(1^2,2^4)$-edge-colorable. Hocquard, Lajou, and Lu\v zar~\cite{HLL1,HLL2} observed that a $(1^{\ell}, 2^k)$-packing edge-coloring can be viewed as an intermediate coloring between proper edge coloring and strong edge-coloring. They showed that every subcubic graph is $(1,2^8)$-edge-colorable and $(1^2,2^5)$-edge-colorable. Furthermore, they conjectured that every subcubic graphs is $(1,2^7)$-edge-colorable and $(1^2,2^4)$-edge-colorable.

\begin{conj}[Hocquard, Lajou, and Lu\v zar~\cite{HLL2}]\label{conj1}
Every subcubic graph is $(1,2^7)$-edge-colorable.    
\end{conj}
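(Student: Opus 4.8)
The natural line of attack is the discharging method applied to a minimal counterexample. Suppose $G$ is a connected subcubic graph that is not $(1,2^7)$-edge-colorable, chosen with $|E(G)|$ minimum, so that every proper subgraph of $G$ is $(1,2^7)$-edge-colorable. One first records the basic counting fact that an edge $uv$ of a subcubic graph lies within distance $2$ of at most $\big((\deg u-1)+(\deg v-1)\big)+4\le 12$ other edges, of which at most $(\deg u-1)+(\deg v-1)\le 4$ lie at distance exactly $1$. Since a pendant edge sees at most $6$ other edges within distance $2$, it can always be colored last, so we may assume $\delta(G)\ge 2$; more substantially, $G$ may be assumed to contain no small \emph{reducible configuration}, i.e.\ no local structure $H$ such that a $(1,2^7)$-edge-coloring of a suitably reduced graph always extends to one of $G$.

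The core of the proof is to assemble a list of reducible configurations rich enough to force a contradiction. The expected entries are the usual ones for subcubic packing colorings: long \emph{threads} (paths whose internal vertices have degree $2$), two adjacent $2$-vertices, a $3$-vertex with two or three $2$-neighbors, and short cycles $C_3$, $C_4$, $C_5$ attached to the rest of $G$ by few edges. For each, one deletes or contracts part of the configuration, colors the smaller graph by minimality, and extends; the configuration is engineered so that every edge needing a (re)color either sees at most $7$ already-used strong colors within distance $2$ or may legally take the matching color $1$. The crucial point is that color $1$ is forbidden only at distance $1$ — hence blocked by at most $4$ edges rather than $12$ — and exploiting this asymmetry is exactly what should let one improve on the trivial $(1,2^9)$ bound from Andersen's theorem~\cite{A1} and on the known $(1,2^8)$ result of Hocquard, Lajou, and Lu\v zar~\cite{HLL2}. (A more global alternative is to choose the color-$1$ matching $M$ so that $G-M$ has controlled structure, say a union of paths and short cycles, and then strong-color $G-M$; but the distance-$2$ constraints passing through edges of $M$ make this essentially as delicate as the local approach.)

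With a sufficient reducible list in hand, the proof would conclude by discharging: assign each vertex $v$ an initial charge such as $\mu(v)=\deg(v)-3$, so that the total charge equals $-n_2$ with $n_2$ the number of $2$-vertices, hence is negative unless $G$ is cubic; then move charge from $3$-vertices to nearby $2$-vertices along threads by a few local rules. The absence of long threads and of $3$-vertices with many $2$-neighbors should guarantee that every final charge is nonnegative, contradicting a negative total. The cubic case $n_2=0$ needs either an adjusted weighting or a separate argument, and — as in this paper's main theorem — the finitely many configurations that fail to be reducible for small graphs would be handled by restricting to connected subcubic graphs above a fixed order, or by a direct check on the small cases.

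The step I expect to be the genuine obstacle is producing a complete, verifiable family of reducible configurations: because a single strong color interacts with the whole radius-$2$ ball around an edge, the configurations and the case analysis of their extensions multiply rapidly, and balancing the discharging with only seven strong colors plus one matching color is tight. Realistically this likely calls for a computer-assisted enumeration of bounded-radius configurations together with a SAT- or ILP-type check that each admits the required extension, which is also the most plausible place for the whole program to stall.
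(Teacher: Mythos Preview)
The statement you are attempting to prove is Conjecture~\ref{conj1}, and the paper does \emph{not} prove it. It is recorded as a conjecture, and the paper merely reports that it was confirmed elsewhere by Liu, Santana, and Short~\cite{LSS1}. The paper's own work is on Conjecture~\ref{conj2}, the $(1^2,2^4)$ case, via an entirely different mechanism: one fixes a maximum pair of disjoint matchings $M_1\cup M_2$ (optimized under several tie-breaking rules), studies the distance-$2$ graph $H$ on the leftover edges, and uses Kostochka--Yancey density for $5$-critical graphs together with a structural classification of the components of $G-M_1-M_2$ to run discharging on $H$. There is nothing in the paper to compare your proposal against.

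As for the proposal itself, it is not a proof but a program description, and you say so explicitly in your final paragraph. The concrete gap is that you never exhibit a single reducible configuration with a verified extension argument, nor any discharging rules that actually balance; you only list the \emph{kinds} of configurations one would hope are reducible and the \emph{shape} of a discharging scheme. Several of the heuristics are also off target for this problem: the initial charge $\mu(v)=\deg(v)-3$ sums to $-n_2$, which is zero in the hardest (cubic) case, so that weighting yields no contradiction precisely where one is needed; and ``two adjacent $2$-vertices'' or ``a $3$-vertex with two $2$-neighbors'' being reducible would be relevant for a minimum-degree argument but does nothing for cubic graphs. The published proof in~\cite{LSS1} is a substantial case analysis; your outline names the genre but supplies none of the content.
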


\begin{conj}[Gastineau and Togni~\cite{GT1}, Hocquard, Lajou, and Lu\v zar~\cite{HLL2}]\label{conj2}
Every subcubic graph is $(1^2,2^4)$-edge-colorable.    \end{conj}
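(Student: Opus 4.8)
The plan is to prove the conjecture by a minimal-counterexample argument that combines a catalogue of reducible configurations with a discharging analysis, reducing everything to a finite check. First, observe that a graph is $(1^2,2^4)$-edge-colorable if and only if each of its connected components is, since colorings of distinct components can be combined without interaction; hence it suffices to treat connected graphs. So suppose for contradiction that $G$ is a connected subcubic graph that admits no $(1^2,2^4)$-edge-coloring and that has the fewest edges among all such graphs. The goal is to derive a contradiction, and the argument will naturally split according to the order of $G$.

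The heart of the proof is a list of \emph{reducible configurations} whose presence in $G$ contradicts minimality. The simplest instance: if $G$ has a vertex of degree at most $1$, or a degree-$2$ vertex on a long enough induced path, delete or suppress it, invoke minimality to color the smaller subcubic graph, and extend. This extension is feasible because in a subcubic graph an edge has at most $4$ edges at distance $1$ and only about a dozen at distance $\le 2$; so at most $4$ edges can forbid a given edge one of the two matching colors and at most about a dozen can forbid it one of the four induced-matching colors, which leaves just enough room to complete the coloring after a bounded local recoloring (for example a Kempe-type swap between the two matching classes, which is cheap since each such class is only a matching). Richer configurations to be shown reducible in the same spirit include short cycles sharing a vertex or an edge, several degree-$2$ vertices lying close together, and small prescribed subgraphs around an edge or a short path; for each one deletes a carefully chosen set of edges, recolors by induction, and re-inserts the deleted edges through color-availability counts and bounded recoloring. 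The known $(1^2,2^5)$-edge-colorability of subcubic graphs can also serve as a source of near-colorings to be repaired.

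With the catalogue established, the second ingredient is a discharging argument: assign each vertex an initial charge based on its degree, redistribute charge along edges according to rules that exploit the absence of every reducible configuration from $G$, and conclude that the charges can balance only when $|V(G)|$ is at most some explicit constant $N$. Thus the minimal counterexample $G$ has at most $N$ vertices and, by reducibility, avoids every configuration on the list. It therefore remains to show that every connected subcubic graph on at most $N$ vertices that avoids all of these configurations is $(1^2,2^4)$-edge-colorable; since avoiding the configurations forces such a graph to be $3$-regular, of large girth, with no two short cycles close together, and so on, this cuts the candidate space down to a family small enough to settle — the smallest graphs by hand and the rest by a directed computer search.

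I expect the real difficulty to lie in two places. First, one must design a set of reducible configurations extensive enough that the discharging forces the bound $N$ low enough for the concluding finite check to be feasible; each individual reducibility claim is an intricate, highly case-dependent color-availability argument, and many of them must be assembled consistently. Second, because subcubic graphs are not planar in general, there is no Euler-formula scaffolding, so the discharging bookkeeping has to be carried out intrinsically on the cubic structure and matched precisely to the configurations that have been proved reducible. Making these two pieces meet in the middle — a configuration list strong enough, a discharging bound small enough, and a residual finite family tractable enough to verify — is where essentially all of the work resides.
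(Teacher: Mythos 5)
Your submission is a proof strategy rather than a proof: no reducible configuration is actually exhibited or proved reducible, no discharging rules are written down, and the concluding finite check is not performed. More importantly, the one quantitative claim you offer to justify the whole reducibility machinery is wrong. In a subcubic graph an edge $e$ has up to $4$ edges at distance $1$ and up to $12$ edges within distance $2$; to give $e$ a matching color you need one of only $2$ colors to be missing from up to $4$ adjacent edges, and to give it a strong color you need one of only $4$ colors to be missing from up to $12$ nearby edges. That is $6$ colors against as many as $16$ potentially conflicting edges, so the local counting does not ``leave just enough room'' --- greedy extension fails outright, and the ``Kempe-type swap'' you invoke to repair it is not routine here: swapping the two matching classes along a component of their union changes which edges may later receive matching colors globally, and controlling this is precisely where the difficulty lives. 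The existence of subcubic graphs that are not $(1^2,2^3)$-colorable already shows that no purely local availability count can settle the problem. Your plan also defers the entire small-order case to an unspecified computer search, whereas the paper explicitly restricts to connected graphs on more than $70$ vertices because the small cases are genuinely not handled.

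For contrast, the paper's argument is global from the start: it fixes two disjoint matchings $M_1, M_2$ with $|M_1\cup M_2|$ maximum (refined by several secondary minimality conditions), shows via alternating-path switches that every component of $G-M_1-M_2$ is a tree with at most three edges ($P_2$, $P_3$, $P_4$, or $K_{1,3}$), and then must $4$-color the conflict graph $H$ on the leftover edges. Rather than extending a coloring edge by edge, it assumes $H$ is not $4$-colorable, extracts a $5$-critical subgraph, and contradicts the Kostochka--Yancey density bound for critical graphs by discharging over the basic components. The Kostochka--Yancey input is the step that replaces your entire reducibility-plus-finite-check apparatus, and nothing in your proposal supplies a substitute for it.
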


Very recently, Liu, Santana, and Short~\cite{LSS1} confirmed that Conjecture~\ref{conj1} is true and extended it to subcubic multigraphs. Their result is also sharp since there exist non-$(1,2^6)$-edge-colorable subcubic graphs.

In this paper, we confirm Conjecture~\ref{conj2} for connected subcubic graphs with more than $70$ vertices. Our result is also sharp since there is a subcubic graph that is not $(1^2,2^3)$-edge-colorable.

\begin{theorem}
Let $G$ be a connected subcubic graph with more than $70$ vertices. Then $G$ has a $(1^2,2^4)$-edge-coloring. 
\end{theorem}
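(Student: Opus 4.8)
The plan is a reducibility-plus-discharging argument run on a minimum counterexample. Suppose the theorem fails and let $G$ be a connected subcubic graph that is not $(1^2,2^4)$-edge-colorable with $|V(G)|$ as small as possible; the goal is to force $|V(G)|\le 70$, contradicting the hypothesis. It is convenient to name the six colors $1_1,1_2,2_1,2_2,2_3,2_4$: an edge may receive $1_i$ only if no adjacent edge already carries $1_i$, and it may receive $2_j$ only if no edge within edge-distance $2$ already carries $2_j$. Since $G$ is subcubic, an uncolored edge $uv$ has at most four adjacent edges and at most twelve edges within distance $2$, so a priori all six colors can be blocked at $uv$; the entire point of the reductions below is to create uncolored edges at which this count provably drops below $6$.

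The first half is a list of \emph{reducible configurations}, i.e.\ subgraphs that cannot occur in $G$. I would expect the list to contain: vertices of degree at most $1$; two adjacent vertices of degree $2$, and more generally short paths and short cycles through degree-$2$ vertices; triangles and short cycles incident to low-degree vertices; and a bounded number of explicit small subcubic gadgets joined to the rest of $G$ by at most three edges. For each configuration $H$ the scheme is uniform: delete or contract a constant-size part of $H$ to obtain a smaller connected subcubic graph $G'$; apply the minimality of $G$ to get a $(1^2,2^4)$-edge-coloring of $G'$, handling by hand the finitely many cases in which $G'$ becomes disconnected or loses too many vertices; then reinsert the removed edges one at a time, coloring each either directly (when at most $5$ colors are forbidden) or after a short Kempe-type exchange among the relevant $1$- and $2$-classes (when the naive count fails). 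Arranging these exchanges so that a color is genuinely freed — in particular for configurations in which a single deleted $2$-edge can block all four induced-matching classes at once — is where I expect nearly all of the technical work to lie.

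The second half is a discharging argument on $G$, now assumed \emph{reduced} (containing none of the listed configurations). Assign to each vertex the charge $\mu(v)=\deg_G(v)$, so the total charge is $2|E(G)|\le 3|V(G)|$, with equality exactly when $G$ is cubic. Using rules that push charge from $3$-vertices toward nearby vertices of degree $2$, the absence of all reducible configurations should leave every vertex with nonnegative net surplus unless $G$ is cubic, and the cubic (``dense'') case then requires a separate and sharper structural count: exploiting the forced girth and the forced local structure around an arbitrary edge to cap the number of vertices. Combining the two cases yields $|V(G)|\le 70$, contradicting that $G$ is a counterexample, and the threshold $70$ in the statement is exactly the bound that this counting produces.

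The crux, to reiterate, is the extension step for the ``$2$-heavy'' reducible configurations: because the induced-matching constraint reaches two steps out, the straightforward tally of colors forbidden at a re-inserted edge routinely reaches $6$, so the configurations must be chosen — and the deletions and recolorings orchestrated — with enough slack that a color is always available; this typically means deleting an entire small gadget rather than one edge and recoloring several edges simultaneously. A secondary obstacle is calibration of the reducible list: it must be rich enough that every sufficiently large subcubic graph contains one of its members (so the discharging closes), yet structured enough that the discharging rules and their case analysis remain tractable.
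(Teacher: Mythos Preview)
Your proposal is a template, not a proof: you name no specific reducible configuration, prove no extension step, and give no discharging rule, and you explicitly defer the entire cubic case to ``a separate and sharper structural count'' with no content. More importantly, the strategy itself misses the key idea. The paper does \emph{not} run a minimum-counterexample reduction on $G$ with charge $\deg_G(v)$. Instead it first passes to the cubic case, then fixes a pair of disjoint matchings $M_1,M_2$ with $|M_1\cup M_2|$ maximum (subject to several tie-breaking conditions) and reduces the problem to showing that the \emph{remaining} edges $E(G)\setminus(M_1\cup M_2)$ admit a strong $4$-edge-coloring. This is reformulated as $4$-coloring an auxiliary ``conflict'' graph $H$ whose vertices are the leftover edges and whose edges record pairs at distance $\le 2$ in $G$. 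If $H$ were not $4$-colorable it would contain a $5$-critical subgraph, and the Kostochka--Yancey density theorem forces $\sum_{v\in V(H)}(d_H(v)-4.5)\ge -2.5$; the discharging is carried out on $H$ with initial charge $d_H(v)-4.5$, and the goal is to push the total strictly below $-2.5$.

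The structural work is therefore not ``delete a gadget, recolor, reinsert'' but rather a sequence of lemmas showing, via the maximality of $M_1\cup M_2$ (and the tie-breakers), that every component of $\hat G=G-M_1-M_2$ is one of $P_2,P_3,K_{1,3}$ (in particular no $P_4$ survives), and severely restricting how such components can be joined by $M_1\cup M_2$-edges. A single discharging rule then moves weight between these basic components, and a short case analysis---using that there are many $P_2$'s or $K_{1,3}$'s once $|V(G)|>70$---forces the total below $-2.5$. None of this machinery (the auxiliary graph, the Kostochka--Yancey input, or the maximality-based structure of $\hat G$) appears in your outline, and your own framework gives no handle on the cubic case, which is the whole problem after the first sentence.
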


\begin{example}
There exist graphs that are not $(1^2,2^3)$-edge-colorable. Here is one example. 
 Let $G$ be the graph obtained by subdividing one edge of $K_{3,3}$ exactly once (See Figure~\ref{ex-1}). 
Note that $G$ has 10 edges, the maximum size of a matching in $G$ is $3$, and the maximum size of an induced matching in $G$ is $1$ (since the diameter of $G$ is two). So if we use two matchings, then we have to (and can) use $10-3\cdot 2=4$ induced matchings for the rest of the edges.  
\end{example}

\begin{figure}
\begin{center}
  \includegraphics[scale=0.7]{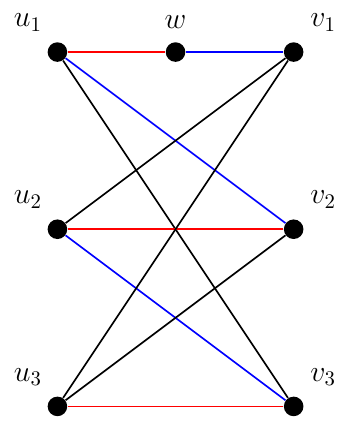} \hspace{10mm}
\end{center}
\caption{A sharp example for the $(1^2, 2^4)$-packing edge-coloring of subcubic graphs.}\label{ex-1}
\end{figure}

\begin{remark}
Our method works extremely well for connected subcubic graphs with more than $70$ vertices. We believe the proof can be extended to subcubic graphs with at most $70$ vertices by adding a few more pages case analysis. However, we do not plan to include it here.
\end{remark}
   


\section{Main Lemmas}

We may assume that $G$ is a connected cubic graph with more than $70$ vertices. If $G$ is a connected subcubic graph with at least one vertex of degree at most two, then we can turn $G$ into a cubic graph by adding edges and vertices. Let $G$ be a connected cubic graph with more than $70$ vertices and is not $(1^2,2^4)$-edge-colorable.

\begin{defn}
For each $M_1 \cup M_2$ maximum in $G$, let $H_{M_1 \cup M_2}$ (in short $H$) be the graph such that:

(1) $V(H) = E(G-M_1-M_2)$;

(2) $E(H) = \{e_1e_2 | d_G(e_1, e_2) \le 2, e_1, e_2 \in V(H)\}$.

\end{defn}

We take $M_1 \cup M_2$ maximum and its $H$ graph further satisfies:
\begin{itemize}
\item[(a)]  $H$ has the minimum number of edges;

\item[(b)] subject to (a), we require the number of $P_4$ (a path with three edges) in $G-M_1-M_2$ is minimized;

\item[(c)] subject to (a) and (b), we require
the number of triangles in $H$ is minimized;

\item[(d)] subject to (a), (b), and (c), we further require the number of paired $P_3$s that is joined by an edge in $M_1 \cup M_2$ through their middle vertex is minimized.
\end{itemize}
\vspace{2mm}

Our goal is to show that $H$ is $4$-colorable. Suppose not, i.e., $H$ is not $4$-colorable. Then it implies there is a 5-critical spanning subgraph $H_0$ of $H$. We recall a result of Kostochka and Yancey:

\begin{theorem}[Kostochka and Yancey~\cite{KY1}]\label{density}
If $G$ is a $k$-critical graph, then $$|E(G)| \ge (\frac{k}{2} - \frac{1}{k-1})|V(G)| - \frac{k(k-3)}{2(k-1)}.$$
\end{theorem}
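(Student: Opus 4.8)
I would first pass to the equivalent potential form. Writing $\rho_k(H) := (k+1)(k-2)\,|V(H)| - 2(k-1)\,|E(H)|$ for a graph $H$ and multiplying the claimed inequality by $2(k-1)$, the statement becomes $\rho_k(G)\le k(k-3)$ for every $k$-critical graph $G$; since a direct computation gives $\rho_k(K_k)=k(k-3)$, the bound is sharp and $K_k$ is the extremal configuration. It suffices to treat $k\ge 4$: for $k\le 3$ every $k$-critical graph is $K_2$, $K_3$, or an odd cycle, which are checked by hand, and the result is applied here only with $k=5$. The plan is to argue by contradiction: let $G$ be a $k$-critical graph with $\rho_k(G) > k(k-3)$ and $|V(G)|$ as small as possible, and derive a contradiction from the structure of $G$.

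\textbf{Structure of a minimal counterexample.} Next I would record the standard facts: $\delta(G)\ge k-1$, and $G$ has no cut vertex (otherwise $G$ is the union of two proper, hence $(k-1)$-colourable, subgraphs whose colourings can be aligned). By Brooks' theorem a connected $(k-1)$-regular $k$-critical graph is $K_k$, which meets the bound, so $G$ has a vertex of degree $\ge k$; thus both the set $L$ of \emph{low} vertices (degree exactly $k-1$) and the set of \emph{high} vertices (degree $\ge k$) matter, and by Gallai's classical description of colour-critical graphs $G[L]$ is a disjoint union of Gallai trees (blocks are cliques or odd cycles). The whole argument is organised around the identity $\rho_k(G)=\sum_{v\in V(G)}\big[(k+1)(k-2)-(k-1)d_G(v)\big]$, in which a low vertex contributes exactly $k-3>0$ and a high vertex contributes at most $-2$; hence $\rho_k(G)\le (k-3)|L|-2(|V(G)|-|L|)$, and the task is to use the local structure to show this is at most $k(k-3)$, i.e. that the low vertices cannot overwhelm the high ones.

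\textbf{The key lemma: proper subsets have large potential.} The engine of the proof is the claim that for every $R\subseteq V(G)$ with $2\le |R|\le |V(G)|-1$ one has $\rho_k(G[R])\ge \rho_k(K_{k-1})=2(k-1)(k-2)$ — no proper induced subgraph of $G$ is as ``dense'' as $K_k$ (and $K_{k-1}$ is the tight case). I would prove this against the minimality of $|V(G)|$: if some such $R$ had smaller potential, collapse the complement of $R$ into a bounded gadget built from a copy of $K_{k-1}$, connecting the boundary of $R$ to clique vertices so as to record which colours the rest of $G$ can force on $\partial R$, thereby producing a graph $G^{*}$ on strictly fewer vertices; using that $G$ is $k$-critical — so $G-R$ and $G[R]$ are each $(k-1)$-colourable and partial colourings can be glued across $\partial R$ — one checks that $G^{*}$ contains a $k$-critical subgraph, which by minimality satisfies the potential bound, and a short computation propagates this back to $\rho_k(G)\le k(k-3)$, contradicting the choice of $G$. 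A refinement of the same collapsing idea controls $\rho_k$ of the closed neighbourhood of a low vertex and of unions of Gallai-tree blocks with their neighbourhoods.

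\textbf{Finishing, and the main obstacle.} To close the argument I would run a discharging computation on $V(G)$ with the charges above: low vertices carry surplus $k-3$ and must send it to nearby high vertices, and the crux is to guarantee each low vertex can locate enough high-vertex charge. This is where a Kempe-chain coloring-extension argument enters: for a low vertex $v$ and a $(k-1)$-colouring $\phi$ of $G-v$ (which exists by criticality), $\phi$ fails to extend to $v$, so $N(v)$ is rainbow and, for every pair of colours $i,j$, the two neighbours of $v$ coloured $i$ and $j$ lie in a common component of the $\{i,j\}$-coloured subgraph (else a Kempe swap frees a colour at $v$, a contradiction). These forced Kempe connections, combined with the Gallai-tree structure of $G[L]$ and the key lemma applied to the relevant neighbourhoods, are enough to carry out the counting and conclude $\rho_k(G)\le k(k-3)$, contradicting the assumption on $G$. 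The main obstacle is precisely the calibration of the key lemma and the discharging rules: the slack is essentially zero — equality at $K_k$, and near-equality at graphs built by gluing copies of $K_k$ and of $K_{k+1}$-minus-an-edge along cliques — so the collapsing gadget must be engineered to make $G^{*}$ genuinely $k$-critical rather than merely $k$-chromatic, and the charge transfers must match the exact potentials of Gallai-tree blocks; by comparison, $\delta(G)\ge k-1$, $2$-connectivity, the Brooks reduction, and the base cases are routine.
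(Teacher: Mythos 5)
The paper offers no proof of this statement: it is quoted as a black box from Kostochka and Yancey~\cite{KY1} (and invoked only for $k=5$), so there is no in-paper argument to compare yours against. Judged against the published proof of~\cite{KY1}, your outline reproduces its strategy faithfully: the potential $\rho_k$, the reformulation $\rho_k(G)\le k(k-3)=\rho_k(K_k)$, a vertex-minimal counterexample, the standard facts ($\delta\ge k-1$, $2$-connectivity, the Brooks reduction, Gallai's theorem on the low-vertex subgraph), the key lemma that every proper vertex subset of a minimal counterexample has potential bounded below with $K_{k-1}$ as the tight case, the Kempe-chain connectivity forced at a low vertex, and a final discharging between low vertices (surplus $k-3$) and high vertices (deficit at least $2$) are exactly the ingredients of the original argument, and your local computations ($\rho_k(K_k)=k(k-3)$, $\rho_k(K_{k-1})=2(k-1)(k-2)$, the per-vertex contributions) are correct.

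Two caveats. First, what you have is a roadmap rather than a proof: the two steps you defer --- showing that the collapsed graph contains a strictly smaller $k$-critical subgraph whose potential bound propagates back, and calibrating the discharging against the exact potentials of Gallai-tree blocks and their neighbourhoods --- constitute essentially the entire content of Kostochka and Yancey's long argument, and as you yourself note the slack is zero at $K_k$ and at the $k$-Ore graphs, so ``these are enough to carry out the counting'' is precisely the part that cannot be waved through. Second, a directional slip in the gadget: in the actual construction one \emph{keeps} $G-R$ and replaces the low-potential set $R$ itself by a clique $X_\phi$ on $k-1$ new vertices, joining $x_i$ to each outside vertex having a neighbour in the colour class $\phi^{-1}(i)$ of a $(k-1)$-colouring $\phi$ of $G[R]$; the resulting graph is not $(k-1)$-colourable, and the potential bookkeeping ($\rho_k$ of the pulled-back set being at most $\rho_k(W)-\rho_k(K_{k-1})+\rho_{k,G}(R)$) only closes in that direction, since one has information about $\rho_{k,G}(R)$ and none about $\rho_{k,G}(V(G)\setminus R)$. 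For the purposes of the present paper, citing~\cite{KY1} is the intended treatment.
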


By Theorem~\ref{density}, we know for our $5$-critical graph $H_0$,
$$|E(H_0)| \ge \frac{9}{4} |V(H_0)| - \frac{5}{4} \quad \text{and} \quad \sum\limits_{v \in V(H_0)} d_{H_0}(v) = 2 |E(H_0)| \ge 4.5 |V(H_0)| - 2.5.$$
Since $|V(H)| = |V(H_0)|$, $$\sum\limits_{v \in V(H)} (d_{H}(v) - 4.5) \ge \sum\limits_{v \in V(H_0)} (d_{H_0}(v) - 4.5) \ge -2.5.$$

We use the discharging method to obtain a contradiction. Let the initial charge of each vertex in $H$ be $\mu(v) = d_{H}(v) - 4.5.$ Our goal is to redistribute the charges so that the final charge $\mu^{\star}(v)$ for each $v \in V(H)$ satisfies 
\begin{equation}\label{charges}
\sum\limits_{v \in V(H)} \mu^{\star}(v) < -2.5.
\end{equation}

We need results on structures of each connected component of $\hat{G}:=G-M_1-M_2$. In the following proof, we use color black to denote an edge in $M_1 \cup M_2$ and use color red to denote an edge in $E(\hat{G})$. We usually put a $1$ ($2$) on a black edge to denote it belongs to $M_1$ ($M_2$).

\begin{lemma}\label{3-edges}
Each component of $\hat{G}$ has no cycle and has at most three edges.
\end{lemma}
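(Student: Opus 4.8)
The plan is to argue by contradiction, exploiting the extremal choice of $M_1\cup M_2$. Suppose some component $C$ of $\hat G=G-M_1-M_2$ either contains a cycle or has at least four edges. We want to modify $M_1,M_2$ (by swapping some red edges into the matchings and pushing some black edges out) to produce a new maximum $M_1'\cup M_2'$ whose graph $H'$ has \emph{fewer} edges than $H$, contradicting minimality condition (a). The quantity to control is the number of pairs of red edges at distance $\le 2$ in $G$; since $G$ is cubic and $C$ has bounded local structure, each red edge contributes only a bounded number of such pairs, so it suffices to show we can strictly decrease the number of red edges, or keep it the same while strictly decreasing the number of conflicting pairs.

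First I would handle the case that $C$ contains a cycle. Since $G$ is cubic and $M_1\cup M_2$ is a maximum union of two matchings, a cycle $Z$ in $\hat G$ is an even cycle whose edges can be $2$-colored alternately; recoloring along $Z$ (i.e.\ taking every other edge of $Z$ into $M_1$, the rest into $M_2$, and evicting the old black edges incident to $V(Z)$) keeps $|M_1\cup M_2|$ maximum while removing all the edges of $Z$ from $\hat G$. The evicted black edges — at most one per vertex of $Z$ by the cubic condition — rejoin $\hat G$, but one checks that the net change in $|E(H)|$ is negative (the $|V(Z)|$ removed red edges each killed at least as many conflict-pairs as the at-most-$|V(Z)|$ newly red edges create, using that the newly red edges are pendant-ish). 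So $\hat G$ is acyclic: every component is a tree.

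Next, for the acyclic case I would suppose some tree component $C$ has $\ge 4$ edges. A tree with $\ge 4$ edges contains either a path $P_5$ (four edges) or a spider/star $K_{1,3}$ with a subdivided leg, and in any case there is a matching $N$ of size $2$ inside $C$ consisting of two red edges at distance $\ge 2$ from each other. The idea is: add both edges of $N$ to (say) $M_1$ — legal since they form an induced-enough pair — and evict the at most four black edges of $M_1\cup M_2$ meeting the endpoints of $N$. This shrinks $\hat G$ by $2$ red edges while adding back at most $4$ black edges; but those black edges have their other endpoint outside $C$ (or we can choose $N$ to avoid creating new large components), and the maximality of $M_1\cup M_2$ plus a careful count of how many conflict-pairs a red edge in $\hat G$ can have — at most some constant, and crucially the newly red edges are more "spread out" — gives a net decrease in $|E(H)|$, or at worst a tie, in which case conditions (b)--(d) (fewer $P_4$'s, fewer triangles in $H$, fewer paired $P_3$'s) are invoked to break it.

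The main obstacle I expect is the bookkeeping in the acyclic case: when we evict black edges they can merge small components of $\hat G$ into bigger ones or create new cycles, so the swap has to be chosen with care (e.g.\ choosing $N$ near a leaf of $C$, or choosing which of $M_1,M_2$ to evict based on where the black neighbors lie), and the final inequality $|E(H')|\le |E(H)|$ must be checked by a local discharging-style count around the affected vertices rather than a one-line argument. The extremal conditions (a)--(d) are clearly designed precisely to make this swap argument go through in all the borderline configurations, so the real content is enumerating those configurations (path vs.\ claw vs.\ the two joined-$P_3$ case mentioned in (d)) and verifying the swap in each.
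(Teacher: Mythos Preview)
Your proposal has a genuine gap: you are reaching for the wrong extremal condition. The paper's proof of this lemma never touches conditions (a)--(d); it uses only the maximality of $|M_1\cup M_2|$, and in each case finds a way to \emph{increase} $|M_1\cup M_2|$ outright rather than to preserve it while decreasing $|E(H)|$.

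The key elementary observation you are missing is this: if a vertex $u$ has degree $3$ in $\hat G$ (so $u$ is incident to no black edge at all) and some neighbour $v$ of $u$ has degree at least $2$ in $\hat G$ (so $v$ is incident to at most one black edge, lying in $M_1$ or $M_2$ but not both), then the red edge $uv$ can be added to whichever of $M_1,M_2$ is free at $v$, strictly increasing $|M_1\cup M_2|$. This one-line move kills the ``claw with a pendant'' configuration immediately and forces every degree-$3$ vertex of $\hat G$ to have its closed neighbourhood as an entire component. For cycles and for paths of length $\ge 4$, the paper then argues that the black edges leaving consecutive internal vertices must alternate between $M_1$ and $M_2$ (else again one can augment), and uses the fact that the components of $G[M_1\cup M_2]$ are paths and even cycles --- so three pendant ends cannot lie in the same component --- to locate a Kempe-style switch after which a red edge can be added to a matching.

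By contrast, your proposed swaps do not work as stated. In the cycle case you assert the cycle is even with no justification; a priori it need not be, and the paper only deduces evenness \emph{after} establishing the alternation of the outgoing black edges. In the tree case, adding two red edges to $M_1$ and ``evicting'' up to four black edges can strictly decrease $|M_1\cup M_2|$, so you no longer have a maximum pair and cannot invoke (a). And even in the best case your argument ends with an admittedly unfinished $|E(H)|$ count whose sign is far from clear once evicted edges re-enter $\hat G$. The paper's route avoids all of this bookkeeping by augmenting $|M_1\cup M_2|$ directly.
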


\begin{proof}
We may assume $\hat{G}$ is connected and prove this statement in several steps.

\begin{figure}[ht]

\begin{center}
  \includegraphics[scale=0.8]{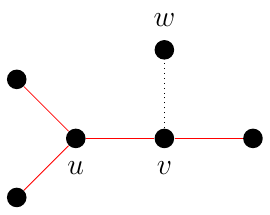} \hspace{15mm}
  \includegraphics[scale=0.8]{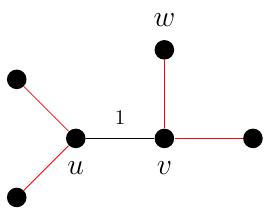} \hspace{15mm}
  \includegraphics[scale=0.8]{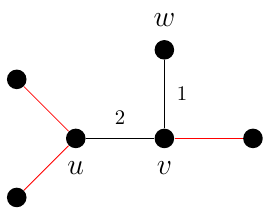}
\caption{No Configuration $C_1$ (on the left) and its proof.}\label{C_1}
\end{center}
\end{figure}


\begin{claim}\label{noc_1}
Let $C_1$ denote the configuration which is a tree with four edges and diameter three (See Figure~\ref{C_1}). There is no $C_1$ in $\hat{G}$ as a subgraph.
\end{claim}

\begin{proof}
We show the claim in two cases.

\textbf{Case 1:} The edge $vw$ is a red edge (See the middle graph in Figure~\ref{C_1}). Then we add $uv$ to $M_1$ to obtain an $M_1'$ such that $|M_1' \cup M_2| > |M_1 \cup M_2|$, which is a contradiction.

\textbf{Case 2:} The edge $vw$ is a black edge. Say $vw \in M_1$ (See the graph to the right in Figure~\ref{C_1}). Then we add $uv$ to $M_2$ to obtain an $M_2'$ such that $|M_1 \cup M_2'| > |M_1 \cup M_2|$, which is a contradiction.
\end{proof}

\noindent \textbf{Remark:} From this Claim, if there is a $3$-vertex $v$ in $\hat{G}$ then $N[v]$ must be a component in $\hat{G}$.

\begin{claim}
There is no cycle in $\hat{G}$.
\end{claim}

\begin{proof}

Suppose not, i.e., there is a cycle of length $k \ge 3$ in $\hat{G}$. Let the cycle be $v_1v_2 \cdots v_k$. By Claim~\ref{noc_1} and $k \ge 3$, the neighbour ($w_i$) of each $v_i$ that is not in the cycle must be connected to $v_i$ by a black edge, i.e., $w_iv_i \in M_1 \cup M_2$, $i \in [k]$ (See Figure~\ref{C_2}).

Let $i \in [k]$ and we may assume $v_iw_i \in M_1$. We must have $v_{i-1}w_{i-1} \in M_2$ and $v_{i+1}w_{i+1} \in M_2$, where $i+1$ and $i-1$ are computed after mod $k$. Suppose not, say $v_1w_1 \in M_1$ and $v_kw_k \in M_1$. Then we add $v_kv_1$ to $M_2$ to obtain $M_2'$ such that $|M_1 \cup M_2'| > |M_1 \cup M_2|$, which is a contradiction (See the middle graph in Figure~\ref{C_2}). Therefore, if we assume without loss of generality that $v_1w_1 \in M_1$, then we must have $v_2w_2 \in M_2, v_3w_3 \in M_1, \ldots, v_kw_k \in M_2$. In other words, the cycle must be even and the $v_iw_i$'s must alternate between in $M_1$ and in $M_2$.

We consider the graph $G' = G[M_1 \cup M_2]$. We know $\Delta(G') \le 2$ and therefore each non-trivial component of $G'$ is either a path or a cycle. We must have the case that $v_1w_1$ and $v_2w_2$ are in the same component of $G'$. Otherwise, say $v_1w_1 \in G'_1$ and $v_2w_2 \in G_2'$, we can switch $M_1$ edges with $M_2$ edges in $G'_1$ to obtain $M_1'$ and $M_2'$. Then we add $v_1v_2$ to $M_1'$ to obtain $M_1''$ such that $|M_1'' \cup M_2'| > |M_1 \cup M_2|$, which is a contradiction. Let the component $v_1w_1$ and $v_2w_2$ are located be called $G'_1$. However, we know $v_kw_k$ is not in the same component with $v_1w_1$ and $v_2w_2$, since there cannot be three leaves in $G'_1$. We switch $M_1$ edges with $M_2$ edges in $G'_1$ to obtain $M_1'$ and $M_2'$. Then we add $v_1v_2$ to $M_1'$ to obtain $M_1''$ such that $|M_1'' \cup M_2'| > |M_1 \cup M_2|$, which is a contradiction (See the graph to the right in Figure~\ref{C_2}). 
\end{proof} 

\begin{figure}[ht]

\begin{center}
  \includegraphics[scale=0.7]{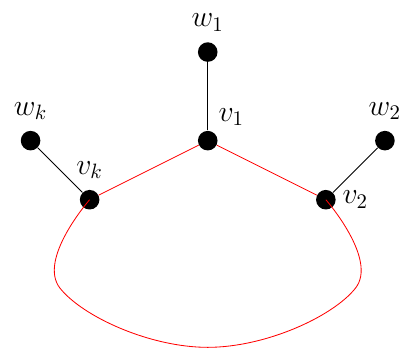} \hspace{5mm}
  \includegraphics[scale=0.7]{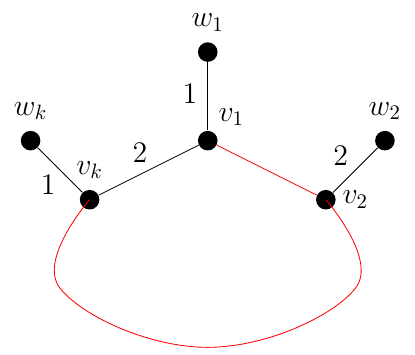} \hspace{5mm}
  \includegraphics[scale=0.7]{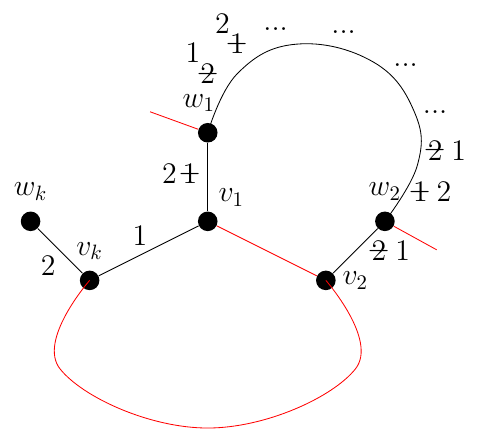}
\caption{No cycle (on the left) and its proof.}\label{C_2}
\end{center}
\end{figure}

\begin{claim}\label{nopath4}
There is no path with at least four edges in $\hat{G}$.
\end{claim}

\begin{proof}
Suppose not, i.e., there is a red path with four edges. Let $u_1u_2u_3u_4u_5$ be such a red path and the neighbours of $u_2,u_3,u_4$ outside of the path be $v_2, v_3, v_4$ respectively. By Claim~\ref{noc_1}, we know each of $u_2v_2, u_3v_3, u_4v_4$ must be black. Furthermore, we know $u_2v_2, u_3v_3, u_4v_4$ must alternates to be in $M_1$ and $M_2$. To see this, say both of $u_2v_2$ and $u_3v_3$ are in $M_1$, then we can add $u_2u_3$ to $M_2$ to obtain $M_2'$ such that $|M_1 \cup M_2'| > |M_1 \cup M_2|$, which is a contradiction. We may assume $u_2v_2 \in M_1, u_3v_3 \in M_2, u_4v_4 \in M_1$.

Then we consider $G' = G[M_1 \cup M_2]$. Since $\Delta(G') \le 2$, each non-trivial component of $G'$ is either a path or a cycle. We must have the case that $u_2v_2$ and $u_3v_3$ are in the same component of $G'$. Otherwise, say $u_2v_2 \in G'_1$ and $u_3v_3 \in G'_2$, then we can switch the $M_1$ edges with $M_2$ edges in $G'_1$ to obtain a new $M_1'$ and $M_2'$. Then we add $u_2u_3$ to $M_1'$ to obtain $M_1''$ such that $|M_1'' \cup M_2'| > |M_1 \cup M_2|$, which is a contradiction. Thus, $u_2v_2$ and $u_3v_3$ are in the same component of $G'$, say $G'_1$. However, we know $u_4v_4$ is not in $G'_1$, since there cannot be three leaves in $G'_1$. Then we switch the $M_1$ edges with the $M_2$ edges in $G'_1$ to obtain a new $M_1'$ and $M_2'$. Then we add $u_3u_4$ to $M_2'$ to obtain $M_2''$ such that $|M_1' \cup M_2''| > |M_1 \cup M_2|$, which is a contradiction (See Figure~\ref{C_3}).
\end{proof}
These three claims show the statement of the lemma.\end{proof}

\begin{figure}[ht]
\begin{center}
  \includegraphics[scale=0.8]{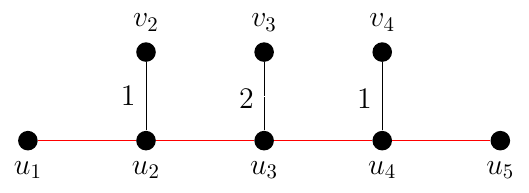} \hspace{10mm}
  \includegraphics[scale=0.8]{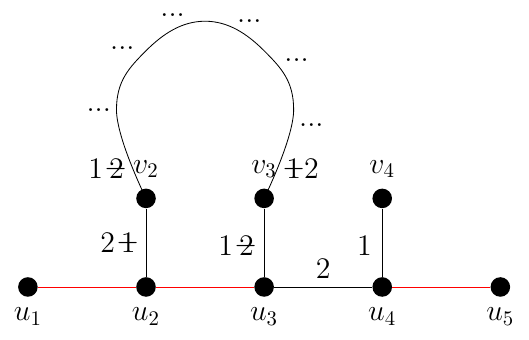}
\caption{No path of four edges and its proof.}\label{C_3}
\end{center}
\end{figure}

By Lemma~\ref{3-edges}, we know each component in $\hat{G}$ has no cycle and at most three edges. Therefore, a component can be a $K_{1,3}$, $P_4$ (a path with four vertices), $P_3$, or a $P_2$. We call them {\em basic components}.

\begin{lemma}\label{3stars}
Two $K_{1,3}$s in $\hat{G}$ cannot be joined by an edge in $M_1 \cup M_2$.
\end{lemma}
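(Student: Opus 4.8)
The plan is to contradict Lemma~\ref{3-edges} by means of a single local augmentation. First I would fix notation: let the two stars be $K_{1,3}$'s with centers $u,v$ and leaves $u_1,u_2,u_3$ and $v_1,v_2,v_3$, and let $e\in M_1\cup M_2$ be an edge joining them. Since $G$ is cubic, a center of a $K_{1,3}$-component has $\hat G$-degree $3$ and hence is incident to no edge of $M_1\cup M_2$, whereas a leaf has $\hat G$-degree $1$ and is incident to exactly two edges of $M_1\cup M_2$ (one in each of $M_1,M_2$). Therefore $e$ joins a leaf of one star to a leaf of the other; after relabelling the leaves and, if necessary, interchanging the roles of $M_1$ and $M_2$, I may assume $e=u_1v_1\in M_1$. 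A short degree-and-component argument shows that $u,u_1,u_2,u_3,v,v_1,v_2,v_3$ are pairwise distinct: identifying any two of them would either put one of the leaves at $\hat G$-degree at least $2$, identify a vertex of $\hat G$-degree $1$ with one of $\hat G$-degree $3$, or place the two centers in a common component.

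Next I would carry out the swap $M_1':=(M_1\setminus\{u_1v_1\})\cup\{uu_1\}$, leaving $M_2$ unchanged. This is legitimate: $uu_1\in E(\hat G)$, so $uu_1\notin M_1\cup M_2$; $u$ is uncovered by $M_1$ (it is a $K_{1,3}$-center); and $u_1$ is uncovered by $M_1\setminus\{u_1v_1\}$ (its only $M_1$-edge was $u_1v_1$). Hence $M_1'$ is a matching, $M_1'\cap M_2=\emptyset$, and $|M_1'\cup M_2|=|M_1\cup M_2|$, so $M_1'\cup M_2$ is again a maximum $M_1\cup M_2$. The next step is to describe $\hat G':=G-M_1'-M_2$: it is obtained from $\hat G$ by deleting the edge $uu_1$ and inserting the edge $u_1v_1$. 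Tracking the incident edges, in $\hat G'$ the vertex $u_1$ is adjacent only to $v_1$, the vertex $v_1$ is adjacent to $u_1$ and $v$, and $v$ is still adjacent to $v_1,v_2,v_3$, and none of the four edges $u_1v_1, v_1v, vv_2, vv_3$ lies in $M_1'\cup M_2$. Thus the component of $\hat G'$ containing $v$ has at least four edges; in fact it is exactly the four-edge tree on $\{u_1,v_1,v,v_2,v_3\}$, which is a copy of the configuration $C_1$. Since the proof of Lemma~\ref{3-edges} (equivalently, of Claim~\ref{noc_1}) uses only that the union of the two matchings is maximum, it applies to $M_1'\cup M_2$ and gives the desired contradiction.

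I do not expect a genuine obstacle: the whole argument is one augmentation plus bookkeeping. The only points needing care are the reduction ``without loss of generality $e=u_1v_1\in M_1$'' (with the symmetric case $e\in M_2$ handled by modifying $M_2$ instead), the check that $M_1'$ remains a matching, and the observation that Lemma~\ref{3-edges} holds for \emph{every} maximum $M_1\cup M_2$, not merely the extremal one fixed by conditions (a)--(d). In particular, unlike some of the later lemmas, this one should not require any appeal to the minimality conditions (a)--(d).
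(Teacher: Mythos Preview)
Your proof is correct. The paper's argument is a little more direct: assuming (by symmetry) $u_1v_1\in M_2$, it observes that the remaining non-$\hat G$ edges at $u_1$ and $v_1$ lie in $M_1$, then deletes $u_1v_1$ from $M_2$ and adds \emph{both} $uu_1$ and $vv_1$ to $M_2$, obtaining $|M_1\cup M_2'|>|M_1\cup M_2|$ in a single stroke. Your route instead trades $u_1v_1$ for $uu_1$ (keeping the size), produces the forbidden configuration $C_1$ at $v$, and then invokes Lemma~\ref{3-edges}; unwinding that appeal is exactly the second augmentation $vv_1$ the paper performs. So the two arguments are the same at heart, with yours factoring through the earlier lemma and the paper's collapsing both augmentations into one. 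Your explicit remark that Lemma~\ref{3-edges} needs only maximality of $|M_1\cup M_2|$ (not conditions (a)--(d)) is exactly what makes your indirection legitimate.
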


\begin{proof}
Suppose there are two $K_{1,3}$s joined by an edge in $M_1 \cup M_2$. Let $uu_1u_2u_3$ be a star with center $u$, $vv_1v_2v_3$ be a star with center $v$, and $u_1v_1$ be an edge in $M_2$. Let $N(u_1) = \{u,v_1, u_1'\}$ and $N(v_1) = \{v, u_1, v_1'\}$. Since $u_1v_1 \in M_2$, $u_1u_1', v_1v_1' \in M_1$. We remove $u_1v_1$ from $M_2$ and add $uu_1,vv_1$ to obtain a new matching $M_2'$ such that $|M_1 \cup M_2'| > |M_1 \cup M_2|$, which is a contradiction (See Figure~\ref{C_8}).
\end{proof}

\begin{figure}[ht]
\begin{center}
  \includegraphics[scale=0.7]{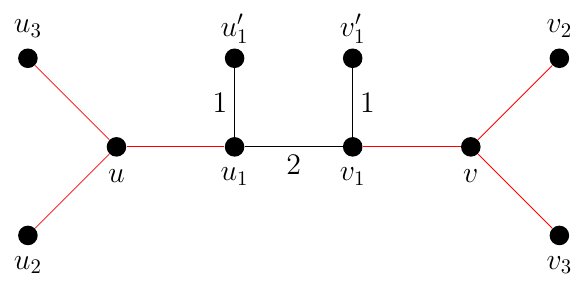} \hspace{10mm}
  \includegraphics[scale=0.7]{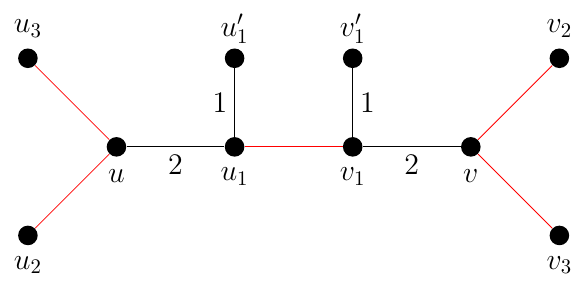}
\caption{No two $K_{1,3}$s joined by an edge in $M_1 \cup M_2$ and its proof.}\label{C_8}
\end{center}
\end{figure}

\begin{observation}\label{3pathmiddlevertices}
Let $u_1u_2u_3u_4$ be a $P_4$ in $\hat{G}$, where the neighbours of $u_2,u_3$ outside of the path are $v_2$ and $v_3$. By the proof of Claim~\ref{nopath4}, either $u_2v_2 \in M_1$ and $u_3v_3 \in M_2$ or $u_2v_2 \in M_2$ and $u_3v_3 \in M_1$. Furthermore, $u_2v_2$ and $u_3v_3$ must be in the same path-component of $G[M_1 \cup M_2]$.
\end{observation}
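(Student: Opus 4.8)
The proof will be a direct adaptation of the argument in Claim~\ref{nopath4}, so the plan is to run the same three moves on the shorter path $u_1u_2u_3u_4$. First I would show that $u_2v_2$ and $u_3v_3$ are black: if $u_2v_2$ were red, then the four red edges $u_1u_2,u_2u_3,u_3u_4,u_2v_2$ would span the five distinct vertices $u_1,u_2,u_3,u_4,v_2$ as a tree of diameter three, i.e.\ a copy of the configuration $C_1$, contradicting Claim~\ref{noc_1}; reading the path from the other end gives the same conclusion for $u_3v_3$.

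Second, I would rule out $u_2v_2$ and $u_3v_3$ lying in the same matching. If both were in $M_1$, then, since $G$ is cubic, the only edge of $M_1\cup M_2$ at $u_2$ is $u_2v_2$ and the only one at $u_3$ is $u_3v_3$, so neither $u_2$ nor $u_3$ is covered by $M_2$; hence $M_2':=M_2\cup\{u_2u_3\}$ is a matching with $|M_1\cup M_2'|>|M_1\cup M_2|$, contradicting the maximality of $M_1\cup M_2$. The case ``both in $M_2$'' is symmetric, so after relabelling $M_1$ and $M_2$ if necessary we may assume $u_2v_2\in M_1$ and $u_3v_3\in M_2$.

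Third, for the ``same path-component'' statement I would invoke the one point that is genuinely new compared with Claim~\ref{nopath4}: in $G':=G[M_1\cup M_2]$, whose components are paths and cycles since $\Delta(G')\le 2$, the vertices $u_2$ and $u_3$ have degree exactly $1$, because their only incident edge outside $E(\hat{G})$ is $u_2v_2$, resp.\ $u_3v_3$. Hence each of $u_2v_2$ and $u_3v_3$ lies in a \emph{path} component of $G'$, never a cycle, with $u_2$, resp.\ $u_3$, as an endpoint. If these were two distinct path components, I would swap the $M_1$- and $M_2$-edges along the one containing $u_2v_2$; this keeps both classes matchings and leaves $M_1\cup M_2$ unchanged, but moves $u_2v_2$ into the new $M_2'$, so $u_2$ and $u_3$ are both uncovered by $M_1'$; then $M_1'\cup\{u_2u_3\}$ is a matching and $|(M_1'\cup\{u_2u_3\})\cup M_2'|>|M_1\cup M_2|$, once more contradicting maximality. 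Therefore $u_2v_2$ and $u_3v_3$ lie in a common path component of $G'$.

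The step I expect to require the most care is this last one, but the only real subtlety is the degenerate case $v_2=v_3$: then $u_2v_2$ and $u_3v_3$ already share the vertex $v_2$ and hence automatically lie in the common path component $u_2v_2u_3$ of $G'$, so the ``distinct components'' branch never arises. Everything else is bookkeeping, the substance having already been carried out in the proof of Claim~\ref{nopath4}.
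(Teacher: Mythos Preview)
Your argument is correct and is exactly the approach the paper intends: the observation is stated without its own proof, simply citing ``by the proof of Claim~\ref{nopath4}'', and what you have written is a faithful unpacking of that reference for the shorter path $u_1u_2u_3u_4$. Your additional care in noting that $u_2$ and $u_3$ have degree~$1$ in $G[M_1\cup M_2]$ (so the relevant component is necessarily a path, justifying the word ``path-component'') and in handling the degenerate case $v_2=v_3$ is more explicit than the paper itself, but entirely in line with it.
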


By Observation~\ref{3pathmiddlevertices}, the path component in $G[M_1 \cup M_2]$ containing $u_2u_2'$ and $u_3u_3'$ is a path with even number of edges. Let the path be $w_1\cdots w_k$, where $w_1 = u_2'$ and $w_k = u_3'$. Furthermore, we know $\Delta(G[M_1 \cup M_2]) \le 2$ and therefore each $w_i$ has a neighbour $w_i'$ such that $w_iw_i' \in \hat{G}$. 

\begin{lemma}\label{3pathmiddlestructure}
Let $u_1u_2u_3u_4$ be a $P_4$ in $\hat{G}$ and $u_2w_1\cdots w_ku_3$ be a path in $M_1\cup M_2$. Let $w_i'$ be the neighbor of $w_i$ not on the path, and $N(w_i') = \{w_i, x_i, y_i\}$, where $i \in \{1, \ldots, k\}$. Then $w_i'x_i, w_i'y_i \in M_1 \cup M_2$. 
\end{lemma}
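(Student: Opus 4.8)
The plan is to argue by contradiction, exploiting the extremal choice of $M_1\cup M_2$ in essentially the same way as in the proofs of Claim~\ref{nopath4} and Lemma~\ref{3stars}. Suppose, for some $i\in\{1,\dots,k\}$, at least one of the two edges $w_i'x_i$, $w_i'y_i$ is red, i.e.\ lies in $E(\hat G)$. We know $w_iw_i'\in E(\hat G)$ is red as well (this was observed just before the lemma statement), so $w_i'$ is incident with at least two red edges, namely $w_iw_i'$ and (say) $w_i'x_i$. The first step is to pin down the colors around $w_i$: since $w_i$ lies on the path $u_2w_1\cdots w_ku_3$ in $M_1\cup M_2$, its two path-neighbors are joined to it by black edges, one in $M_1$ and one in $M_2$ (the path alternates), and $w_iw_i'$ is its unique red edge.

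**The swap.**

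Next I would perform an augmenting-type exchange. Look at the black edge of the path incident to $w_i$ on the $w_1$-side, call it $w_{i-1}w_i$ (or $u_2w_1$ if $i=1$), and suppose it lies in $M_j$ for $j\in\{1,2\}$. The idea is to recolor: remove $w_{i-1}w_i$ from $M_j$ and instead put the red edge $w_iw_i'$ into $M_j$; this is legal for a matching at $w_i$ and at $w_i'$ provided $w_i'$ has no other $M_j$-edge — and here is where the hypothesis ``$w_i'x_i$ is red'' helps, since it means one of the two non-$w_i'$-to-$w_i$ slots at $w_i'$ is not occupied by a black edge of either matching. More precisely, since $d_G(w_i')=3$ and $w_i'x_i\in E(\hat G)$, $w_i'$ is incident with at most one black edge, so we can always choose which of $M_1,M_2$ to push $w_iw_i'$ into so as to avoid a conflict at $w_i'$. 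After this swap $|M_1\cup M_2|$ is unchanged, but the structure of $\hat G$ has changed: the component structure around $w_i$ and around the $u_i$'s gets reorganized. I would then chase through the consequences to contradict one of the extremal conditions (a)–(d): either $H$ gains no edges but loses a $P_4$ (the $P_4$ through the $u_i$'s is destroyed, contradicting (b)), or the count of paired $P_3$s joined through middle vertices drops (contradicting (d)), or — if that particular red edge being present would have forced $|M_1\cup M_2|$ to be non-maximum after a longer alternating exchange along the $M_1\cup M_2$-path — we contradict maximality outright.

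**Handling the cases.**

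There will be a small case analysis depending on (i) whether the offending red edge is $w_i'x_i$ or $w_i'y_i$ (symmetric), (ii) the position $i$ along the path (the endpoints $i=1$ and $i=k$ behave slightly differently because $w_1$, $w_k$ are adjacent via black edges to $u_2$, $u_3$, which sit on the red $P_4$), and (iii) which of $M_1$, $M_2$ the relevant path edges belong to. In each case the move is the same flavor: either a single-edge augmentation raising $|M_1\cup M_2|$ (impossible), or a ``switch $M_1\leftrightarrow M_2$ on one path-component of $G[M_1\cup M_2]$, then augment'' move as in Claim~\ref{nopath4} (also impossible), or a neutral swap that strictly decreases one of the minimized quantities in (a)–(d) while not increasing the earlier ones. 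I would organize these as sub-claims mirroring the earlier claims.

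**Main obstacle.**

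The hard part will be bookkeeping the effect of the neutral swap on the quantity $|E(H)|$ and on the $P_4$-count and triangle-count simultaneously: I must verify that pushing $w_iw_i'$ into $M_1\cup M_2$ and ejecting $w_{i-1}w_i$ does not \emph{increase} $|E(H)|$ (condition (a) is the top priority), because $H$ depends on $M_1\cup M_2$ in a global way — adding $w_iw_i'$ to the matchings removes it from $\hat G$ but adds $w_{i-1}w_i$ back to $\hat G$, and the new $\hat G$ could in principle create new short distances. Controlling this will require using Lemma~\ref{3-edges} and Lemma~\ref{3stars} to bound the local structure tightly enough that the edge-count of $H$ is seen to be non-increasing, and only then invoking the lower-priority conditions (b)–(d) to finish. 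I expect this to be the delicate, figure-heavy portion of the argument.
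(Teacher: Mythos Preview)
Your plan misses the decisive move and as a result heads into the bookkeeping swamp you yourself flag as the ``main obstacle''. The paper's proof is a one-paragraph direct contradiction with the \emph{maximality} of $|M_1\cup M_2|$, never touching conditions (a)--(d). Assuming $w_{i-1}w_i\in M_1$, $w_iw_{i+1}\in M_2$, and (if $w_i'y_i$ is black at all) $w_i'y_i\in M_2$, the paper deletes $w_iw_{i+1}$ from $M_2$, switches $M_1\leftrightarrow M_2$ along the segment $u_2w_1\cdots w_i$, and then adds \emph{two} edges to $M_1$: the edge $w_iw_i'$ \emph{and} the middle edge $u_2u_3$ of the $P_4$. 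The color-switch frees $u_2$ of its $M_1$-edge, and by Observation~\ref{3pathmiddlevertices} the path $u_2w_1\cdots w_ku_3$ has even length, so $u_3$'s path-edge $w_ku_3$ was already in $M_2$; hence $u_2u_3$ can legally enter $M_1$. Net change: $-1$ edge out, $+2$ edges in, so $|M_1'\cup M_2'|>|M_1\cup M_2|$.

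The idea you are missing is precisely to put $u_2u_3$ into a matching. Your neutral swap (remove $w_{i-1}w_i$, insert $w_iw_i'$) keeps $|M_1\cup M_2|$ fixed and does \emph{not} destroy the $P_4$ $u_1u_2u_3u_4$ unless $i\in\{1,k\}$; for interior $i$ the four edges $u_1u_2,u_2u_3,u_3u_4$ remain red, so your appeal to condition~(b) fails. You would then be forced into exactly the delicate $|E(H)|$ accounting you anticipated, and it is not at all clear that this terminates favorably. The paper sidesteps all of it by recognizing that breaking the $M_1\cup M_2$-path at $w_i$ and flipping one half makes \emph{both} endpoints of the red edge $u_2u_3$ available for the same matching.
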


\begin{proof}
Suppose there is an $i \in \{1, \ldots, k\}$ such that $w_i'x_i \in \hat{G}$. By symmetry, we may assume that $w_{i-1}w_i \in M_1$ and $w_iw_{i+1} \in M_2$. 
If $w_i'y_i \in M_1 \cup M_2$, we can assume $w_i'y_i \in M_2$.  We delete $w_iw_{i+1}$ from $M_2$, add $w_iw_i', u_2u_3$ to $M_1$, and switch the $M_1, M_2$ edges in the path $w_1\cdots w_i$. This will give us two new disjoint matchings $M_1', M_2'$ with $|M_1' \cup M_2'| > |M_1 \cup M_2|$, which is a contradiction (See Figure~\ref{C_4} left and middle picture). 
\end{proof}

\begin{figure}[ht]
\begin{center}
    \includegraphics[scale=0.6]{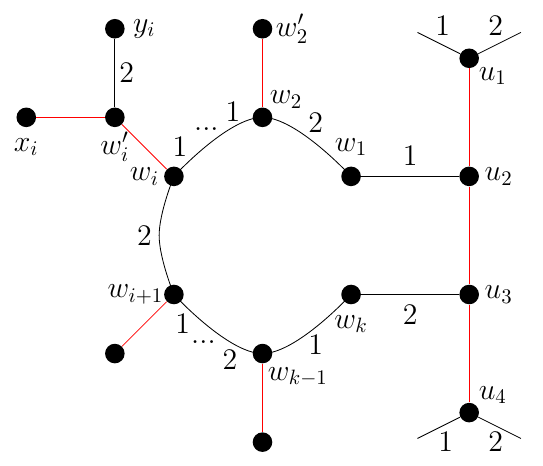} 
    \includegraphics[scale=0.6]{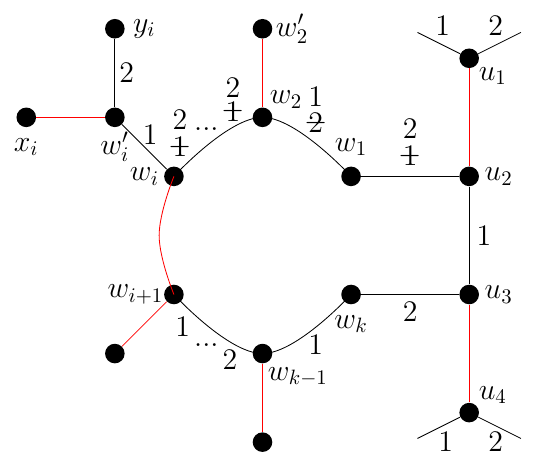} 
    \includegraphics[scale=0.6]{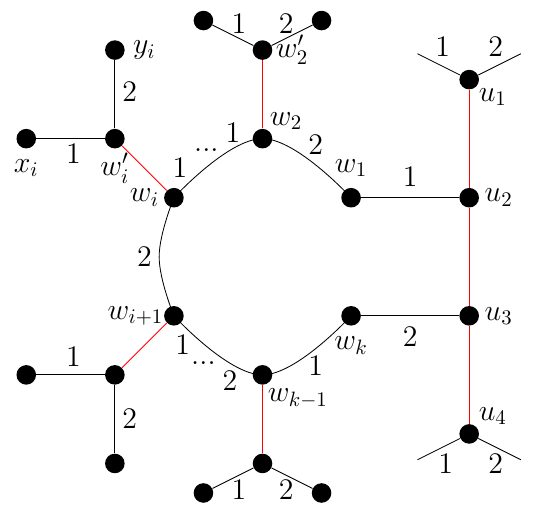}
\caption{$P_4$ and its middle cycle.}\label{C_4}
\end{center}
\end{figure}

\begin{lemma}\label{3star3path}
A $K_{1,3}$ in $\hat{G}$ cannot be joined to a $P_4$ in $\hat{G}$ by an edge in $M_1 \cup M_2$.
\end{lemma}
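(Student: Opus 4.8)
The plan is to argue by contradiction in the same spirit as Lemmas~\ref{3stars} and~\ref{3pathmiddlestructure}: assuming a $K_{1,3}$ is joined to a $P_4$ by a black edge, I would either enlarge $M_1\cup M_2$ directly, or perform an alternating-path switch inside $G[M_1\cup M_2]$ and then add an edge, contradicting the maximality of $M_1\cup M_2$. Let $u_1u_2u_3u_4$ be the $P_4$ in $\hat G$ with $v_2,v_3$ the neighbours of $u_2,u_3$ off the path, and let $zz_1z_2z_3$ be the star with centre $z$. The joining black edge must connect a vertex of the star to a vertex of the $P_4$; since the middle vertices $u_2,u_3$ already have their off-path neighbours $v_2,v_3$ forced to be black by Observation~\ref{3pathmiddlevertices}, and $v_2,v_3$ lie on the path component $w_1\cdots w_k$ of $G[M_1\cup M_2]$, the join meets the $P_4$ either at an endpoint ($u_1$ or $u_4$) or at a middle vertex ($u_2$ or $u_3$), and on the star side it meets either the centre $z$ or a leaf $z_i$.

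First I would dispose of the case where the join is incident to an endpoint of the $P_4$, say $u_1$. If $u_1$ is joined to a leaf $z_1$ of the star by a black edge $e$, then $u_1u_2$ and $z z_1$ are both red, and I would delete $e$ from its matching class and add the two red edges $u_1u_2$ and $zz_1$ to the other class (checking they are non-adjacent to the surviving black edges at $u_1$'s and $z_1$'s other endpoints — here the star structure and Claim~\ref{noc_1} guarantee the other edges at $z_1$ are black, exactly as in Lemma~\ref{3stars}), increasing $|M_1\cup M_2|$. If $u_1$ is joined to the centre $z$, a similar swap works, now using that all three star edges at $z$ would otherwise conflict — in that subcase I would instead add $zz_1$ (one star edge) together with $u_1u_2$, which suffices. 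The case where the join is incident to a middle vertex $u_2$ or $u_3$ is the more delicate one: there $u_2$ already has the black edge $u_2v_2=u_2w_1$ going into the path component $w_1\cdots w_k$, so a third edge at $u_2$ cannot simply be a join into the star without violating $\Delta(G)\le 3$ — hence the join must be incident to $u_3$'s side or be absorbed; I expect this forces the join to actually be one of the already-analysed configurations or to reduce to a switch along $w_1\cdots w_k$ exactly as in Lemma~\ref{3pathmiddlestructure}, deleting one path edge, adding $u_2u_3$ and the absorbing red star edge, and flipping a sub-path.

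The main obstacle will be bookkeeping the incidences so that after a swap the two new classes really are matchings: whenever I move a red edge into a matching class I must know the black edges sharing its endpoints are in the \emph{other} class. For the star leaves this is immediate (their two non-pendant-seeming edges are black and, being a matching pair at distinct vertices, lie in opposite classes or can be made so by a component switch, as in Lemma~\ref{3stars}); for the $P_4$ endpoints it follows from Claim~\ref{noc_1} and Observation~\ref{3pathmiddlevertices}; for the middle-vertex case it follows from Lemma~\ref{3pathmiddlestructure}, which already pins down the black edges hanging off $w_1\cdots w_k$. I would also need to confirm, in the component-switching subcases, that the two ends of the join do not lie on the \emph{same} short component of $G[M_1\cup M_2]$ (which would create a third leaf), using the ``no three leaves'' trick that recurs throughout Lemma~\ref{3-edges}; with more than $70$ vertices and the extremal choice of $M_1\cup M_2$ this is routine. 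Once all four$\times$two incidence patterns are checked, each yields $|M_1\cup M_2'|>|M_1\cup M_2|$ or $|M_1'\cup M_2'|>|M_1\cup M_2|$, the desired contradiction.
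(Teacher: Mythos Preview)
Your overall strategy (enlarge $M_1\cup M_2$, with a component switch in $G[M_1\cup M_2]$ when a direct swap is blocked) is indeed the paper's strategy, but the execution has real gaps.

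First, the case ``join meets the centre $z$'' is vacuous: the centre of a $K_{1,3}$ in $\hat G$ has all three edges red, so it carries no edge of $M_1\cup M_2$. The join always lands on a star leaf. More importantly, in your endpoint case (star leaf $z_1$ joined to $u_1$) your swap is wrong as stated. If $e=u_1z_1\in M_2$, then the other black edge at $z_1$ is forced into $M_1$, so adding $zz_1$ ``to the other class'' $M_1$ immediately conflicts at $z_1$. The correct move is to re-use the \emph{same} class $M_2$ after deleting $e$; but then the obstacle is not at $u_1$ or $z_1$ but at $u_2$: adding $u_1u_2$ to $M_2$ requires $u_2u_2'\notin M_2$, and Observation~\ref{3pathmiddlevertices} only tells you $u_2u_2'$ and $u_3u_3'$ are in \emph{opposite} classes, not which. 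The paper's proof handles this explicitly: if $u_2u_2'\in M_1$ the simple swap works; if $u_2u_2'\in M_2$, then the path component $G_1'\subset G[M_1\cup M_2]$ containing $u_2u_2',u_3u_3'$ has $u_2,u_3$ as leaves, so by the three-leaves argument $u_1u_1'$ and $z_1z_1'$ lie outside $G_1'$, and one switches $G_1'$ to flip $u_2u_2'$ into $M_1$ before doing the swap. Your final paragraph gestures at a switching step but never pins down that the blockage is at $u_2$ or that the switch is on the component through $u_2u_2',u_3u_3'$.

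For the middle-vertex case your degree remark is muddled: $u_2$ has exactly one black edge $u_2u_2'$, so ``the join meets $u_2$'' simply means $u_2'$ is a star leaf. This case can in fact be killed in one line by Lemma~\ref{3pathmiddlestructure} (the off-path neighbour $w_1'$ of $w_1=u_2'$ would be the star centre, whose remaining edges are red, contradicting that lemma), or by the paper's direct swap: with $u_2u_2'\in M_1$ one forces $u_3u_3'\in M_2$, deletes $u_2u_2'$ from $M_1$, and adds $zz_1$ and $u_2u_3$ to $M_1$. Either route is short; your sketch does not actually carry it out.
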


\begin{proof}
Suppose not. Let $vv_1v_2v_3$ be a star with center $v$, $u_1u_2u_3u_4$ be a path. Since they can be joined by an edge in two non-symmetric ways, we have two cases.

\textbf{Case 1:} $v_1u_2 \in M_1 \cup M_2$. Say $v_1u_2 \in M_1$. Then we may assume $u_3u_3' \in M_2$, since otherwise we can add $u_2u_3$ to $M_2$ to obtain a new matching $M_2'$ such that $|M_1 \cup M_2'| > |M_1 \cup M_2|$, which is a contradiction. We know $v_1v_1'$ must be in $M_2$ as well. Therefore, we can delete $v_1u_2$ and add $vv_1, u_2u_3$ to $M_1$ to obtain a new matching $M_1'$. It is a contradiction since $|M_1' \cup M_2| > |M_1 \cup M_2|$ (See Figure~\ref{C_5}).

\textbf{Case 2:} $v_1u_1 \in M_1 \cup M_2$. Say $v_1u_1 \in M_2$. Then $v_1v_1', u_1u_1'$ must be in $M_1$. We must have $u_2u_2' \in M_2$ since otherwise we can delete $v_1u_1$ and add $u_1u_2, vv_1$ to $M_2$ to obtain a new matching $M_2'$ such that $|M_1 \cup M_2'| > |M_1 \cup M_2|$, which is a contradiction. We must also have $u_3u_3' \in M_1$ since otherwise we can add $u_2u_3$ to $M_1$ to obtain a new matching $M_1'$. It is a contradiction since $|M_1' \cup M_2| > |M_1 \cup M_2|$. 

Then we consider $G' = G[M_1 \cup M_2]$. Since $\Delta(G') \le 2$, each non-trivial component of $G'$ is either a path or a cycle. By Observation~\ref{3pathmiddlevertices}, $u_2u_2'$ and $u_3u_3'$ are in the same component of $G'$. Let the component $u_2u_2'$ and $u_3u_3'$ was located be called $G_1'$. We must have $u_1u_1'$ and $v_1v_1'$ are not in $G_1'$ since $G_1'$ is a path component in $G'$ and $u_2,u_3$ are both leaf vertices in $G_1'$. Therefore, we can switch the $M_1$ edges with $M_2$ edges in $G_1'$ and add $u_1u_2,vv_1$ to $M_2$ to obtain a new $M_2'$ such that $|M_1 \cup M_2'| > |M_1 \cup M_2|$, which is a contradiction (See Figure~\ref{C_5}).
\end{proof}


\begin{figure}[ht]
\begin{center}
  \includegraphics[scale=0.7]{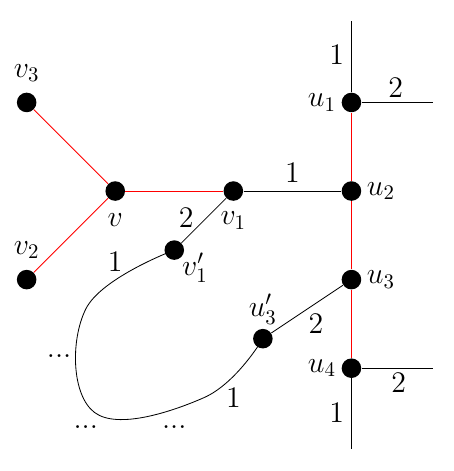} \hspace{20mm}
  \includegraphics[scale=0.7]{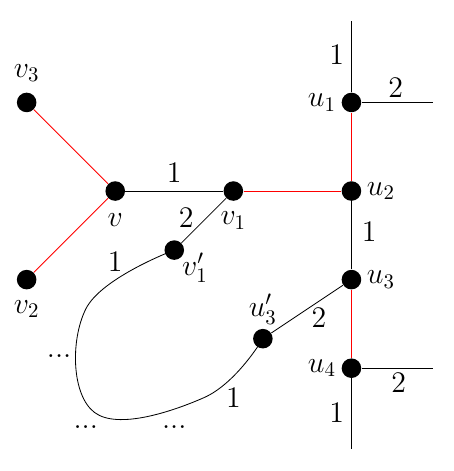}
    \includegraphics[scale=0.6]{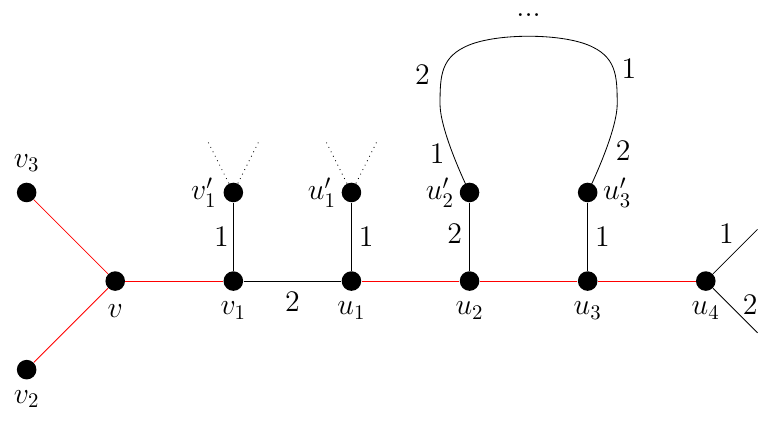} \hspace{0mm}
  \includegraphics[scale=0.6]{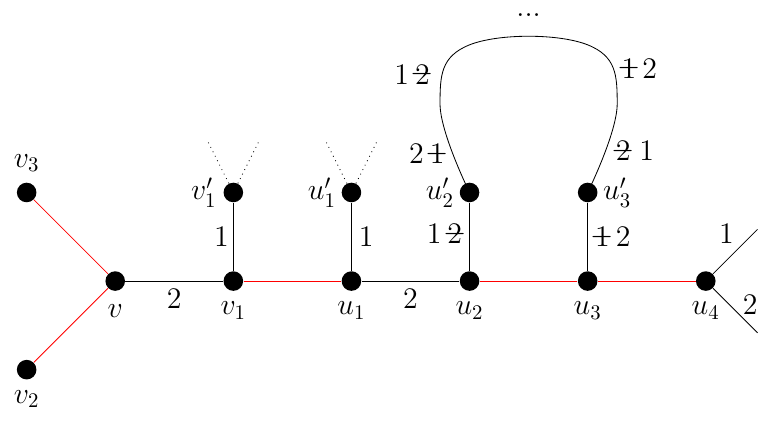}
\caption{No $K_{1,3}$ and $P_4$ joined by an edge in $M_1 \cup M_2$ and its proof.}\label{C_5}
\end{center}
\end{figure}

By Lemma~\ref{3pathmiddlestructure} and~\ref{3star3path}, we already knew the structure in the middle two vertices of a $P_4$ $u_1u_2u_3u_4$ in $\hat{G}$ (See Figure~\ref{C_7} the right picture). Lemma~\ref{3star3path} also tells us some structures on the endpoint of the $P_4$. We can further analyse the structure outside of $u_1$ and $u_4$. 

\begin{lemma}\label{3path2path}
A $P_4$ in $\hat{G}$  cannot be joined to the middle vertex of a $P_3$ in $\hat{G}$  by an edge in $M_1 \cup M_2$.
\end{lemma}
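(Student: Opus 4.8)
\textbf{Proof plan for Lemma~\ref{3path2path}.}

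The plan is to argue by contradiction, following the same augmenting/switching paradigm used in Lemmas~\ref{3stars}--\ref{3star3path}. Suppose a $P_4$, say $u_1u_2u_3u_4$ in $\hat{G}$, is joined to the middle vertex of a $P_3$, say $a_1a_2a_3$ with middle vertex $a_2$, by an edge $e \in M_1 \cup M_2$ through $a_2$. The edge $e$ either attaches $a_2$ to an endpoint of the $P_4$ (i.e.\ $e = a_2u_1$, up to symmetry $e=a_2u_4$) or to a middle vertex of the $P_4$ (i.e.\ $e = a_2u_2$, up to symmetry $e=a_2u_3$). So I would split into these two cases. In each case I first pin down the forced colors: since $a_2$ has degree $3$ in $G$ and $a_1a_2,a_2a_3 \in E(\hat{G})$ are red, the black edge $e$ at $a_2$ is the unique black edge there; and by Claim~\ref{noc_1} / the remark after it, together with the alternation arguments already established (the analogue of the reasoning in Claim~\ref{nopath4} and Lemma~\ref{3star3path}), the black edges hanging off $u_2$ and $u_3$ are forced to alternate between $M_1$ and $M_2$, and by Observation~\ref{3pathmiddlevertices} they lie in a common path-component of $G[M_1\cup M_2]$.

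For the endpoint case $e=a_2u_1$ (say $e\in M_2$): then $a_2a_1$ or the other incidences force $u_1u_1'\in M_1$ where $u_1' \ne u_2$ is the third neighbor of $u_1$; as in Case~2 of Lemma~\ref{3star3path}, I push the constraint inward, getting $u_2u_2'\in M_2$ and $u_3u_3'\in M_1$ (else an immediate augmentation of $M_2$ or $M_1$ works). Then, working in $G':=G[M_1\cup M_2]$, by Observation~\ref{3pathmiddlevertices} the edges $u_2u_2'$ and $u_3u_3'$ sit in one path-component $G_1'$ with $u_2,u_3$ as leaves, so $u_1u_1'$ is not in $G_1'$; switching $M_1\leftrightarrow M_2$ on $G_1'$ frees up the color needed to add $u_1u_2$ and $a_2u_1$'s replacements $a_1a_2$-type edges --- concretely, delete $e=a_2u_1$, add $u_1u_2$ and $a_2a_1$ (or $a_2a_3$) to the recolored matching, yielding $M_1'\cup M_2'$ with $|M_1'\cup M_2'|>|M_1\cup M_2|$, a contradiction. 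For the middle case $e=a_2u_2$ (say $e\in M_1$): now $u_2$ already carries the black edge $e$, so the alternation argument forces $u_3u_3'\in M_2$, and $a_2a_1,a_2a_3$ are red while $u_2$'s other neighbors $u_1,u_3$ are red-incident; I delete $e=a_2u_2$, add $a_1a_2$ (the red $P_3$-edge) to $M_1$ and $u_2u_3$ to $M_2$ if that stays proper, otherwise I invoke Lemma~\ref{3pathmiddlestructure}/the middle-path structure to switch along the $M_1\cup M_2$ path threaded through $u_2$ and $u_3$ and then augment --- again producing a larger $M_1\cup M_2$ or, if $H$-edge-count is unchanged, a configuration contradicting one of the minimality conditions (a)--(d). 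I expect a short subcase check on which of $a_1,a_3$ to reattach, governed by whether $a_1$ or $a_3$ has a black edge in the relevant component.

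The main obstacle, as in the preceding lemmas, is the bookkeeping in $G[M_1\cup M_2]$: I must verify that the path-component I switch along genuinely has $u_2$ (or $u_3$) as a leaf and does \emph{not} also contain the other black edge I need to remain untouched (here $u_1u_1'$, $a_2a_2$'s partner, or $w_i'$-type edges from Lemma~\ref{3pathmiddlestructure}). This is exactly where ``there cannot be three leaves in a path-component'' gets used, and I would lean on Observation~\ref{3pathmiddlevertices} and Lemma~\ref{3pathmiddlestructure} to guarantee it. A secondary subtlety is that some augmentation may leave $|M_1\cup M_2|$ unchanged while strictly improving one of the tie-breakers (a)--(d) --- for instance reducing the number of $P_4$'s in $\hat G$ or the number of paired $P_3$'s joined through a middle vertex; the lemma's conclusion is ultimately about ruling out exactly the configuration named in tie-breaker (d), so I would phrase the final contradiction flexibly, against $|M_1\cup M_2|$ maximality first and against the minimality chain (a)--(d) as a fallback.
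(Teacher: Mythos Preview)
Your plan has two genuine gaps.

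\textbf{The middle case is a non-case.} You propose a separate argument when $e=a_2u_2$ joins $a_2$ to a middle vertex of the $P_4$, but Observation~\ref{3pathmiddlevertices} already excludes this in one line. The middle vertex $a_2$ of the $P_3$ has both $a_2a_1$ and $a_2a_3$ red, so $a_2$ has degree~$1$ in $G[M_1\cup M_2]$. If $a_2=u_2'$, the path-component of $G[M_1\cup M_2]$ containing $u_2u_2'$ is the single edge $u_2a_2$, which cannot also contain $u_3u_3'$; this contradicts Observation~\ref{3pathmiddlevertices}. So only the endpoint case $e=a_2u_1$ (or $a_2u_4$) survives, and the paper's proof begins there.

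\textbf{The endpoint augmentation you describe fails.} You propose to delete $e=a_2u_1$ and add both $u_1u_2$ and $a_2a_1$ (or $a_2a_3$), increasing $|M_1\cup M_2|$. But $a_1$ (and symmetrically $a_3$) is a leaf of a red $P_3$ in a cubic graph, so it carries exactly one red edge and two black edges, one in each of $M_1,M_2$. Hence $a_1$ and $a_3$ are already saturated in \emph{both} matchings; neither $a_2a_1$ nor $a_2a_3$ can be added to either, no matter what component you switch on. You cannot gain an edge here. The paper does not try to: it deletes $u_1v$ from $M_1$ and adds $u_1u_2$ to $M_1$ (after, if necessary, swapping $M_1\leftrightarrow M_2$ on the path-component $G_1'$ through $u_2,u_3$ to force $u_2u_2'\in M_2$; this swap is safe because $v$ has degree~$1$ in $G[M_1\cup M_2]$, so $u_1v$ and $u_1u_1'$ lie outside $G_1'$). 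The size $|M_1\cup M_2|$ is unchanged, but $|E(H)|$ drops by two, contradicting condition~(a).

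Finally, your remark that ``the lemma's conclusion is ultimately about ruling out exactly the configuration named in tie-breaker~(d)'' is a misreading: condition~(d) concerns two $P_3$'s joined through \emph{both} of their middle vertices (the subject of Lemma~\ref{two2pathsmiddle}), not a $P_4$ joined to a $P_3$. The operative tie-breaker here is~(a).
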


\begin{proof}
Suppose not, a $P_4=u_1u_2u_3u_4$ is joined to a $P_3=v_1vv_2$ by an edge in $M_1 \cup M_2$.   


By Observation~\ref{3pathmiddlevertices}, we assume that $u_1v \in M_1\cup M_2$. Say $u_1v \in M_1, u_1u_1' \in M_2$. Let $N(u_2) = \{u_1, u_3, u_2'\}$, $N(u_3) = \{u_2, u_4, u_3'\}$, and $N(u_1) = \{u_2, v, u_1'\}$. By Observation~\ref{3pathmiddlevertices}, either $u_2u_2' \in M_1$ and $u_3u_3' \in M_2$ or $u_2u_2' \in M_2$ and $u_3u_3' \in M_1$. Furthermore, $u_2u_2'$ and $u_3u_3'$ are in the same path component of $G[M_1 \cup M_2]$ (say this component is $G_1'$). Since $v$ is a degree one vertex in $G[M_1 \cup M_2]$, $u_1u_1', u_1v$ are not in $G_1'$. We can assume $u_2u_2' \in M_2$ and $u_3u_3' \in M_1$ since if $u_2u_2' \in M_1$ and $u_3u_3' \in M_2$ then we switch the $M_1,M_2$ edges in $G_1'$. This will again give us the case $u_2u_2' \in M_2$ and $u_3u_3' \in M_1$. We delete $u_1v$ from $M_1$ and add $u_1u_2$ to $M_1$. The number of edges in $H$ is dropped by two, which is a contradiction (See Figure~\ref{C_6}).
\end{proof}

\begin{figure}[ht]
\begin{center}
    \includegraphics[scale=0.6]{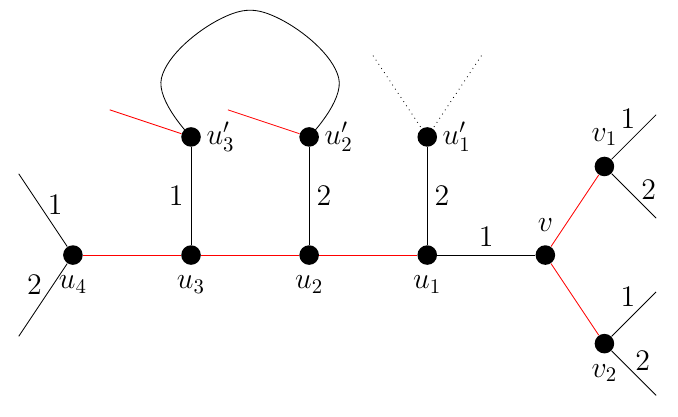} \hspace{0mm}
  \includegraphics[scale=0.6]{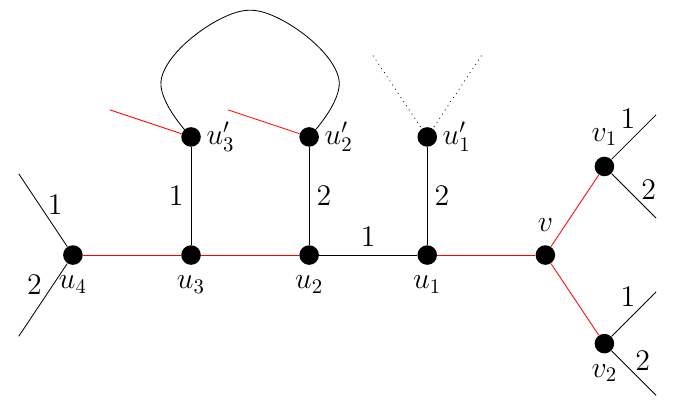}
\caption{No $P_4$ can join to the middle vertex of a $P_3$ by an edge in $M_1 \cup M_2$ and its proof.}\label{C_6}
\end{center}
\end{figure}

Now we set up for Lemma~\ref{3pathoutside} -~\ref{p43cycle}. Let $u_1u_2u_3u_4$ be a $P_4$ in $\hat{G}$.  Let $N(u_1) = \{u_2, v_1, z_1\}$. By Lemma~\ref{3-edges}, $u_1v_1, u_1z_1 \in M_1 \cup M_2$. We may assume $u_1v_1 \in M_1$ and $u_1z_1 \in M_2$. Let $N(v_1) = \{u_1, v_2, v_3\}$ and $N(z_1) = \{u_1, z_2, z_3\}$. By Lemma~\ref{3path2path}, we cannot have both $v_1v_2, v_1v_3 \in \hat{G}$. Thus, we may assume $v_1v_2 \in \hat{G}$. Similarly, we may assume $z_1z_2 \in \hat{G}$. Since $u_1v_1 \in M_1$ and $u_1z_1 \in M_2$, we know $v_1v_3 \in M_2$ and $z_1z_3 \in M_1$. Let $N(v_2) = \{v_1, v_4, v_5\}$, $N(v_3) = \{v_1, v_6, v_7\}$, and $N(v_6) = \{v_3, v_8, v_9\}$. Then we have the following lemma. 

\begin{figure}[ht]
\begin{center}
  \includegraphics[scale=0.6]{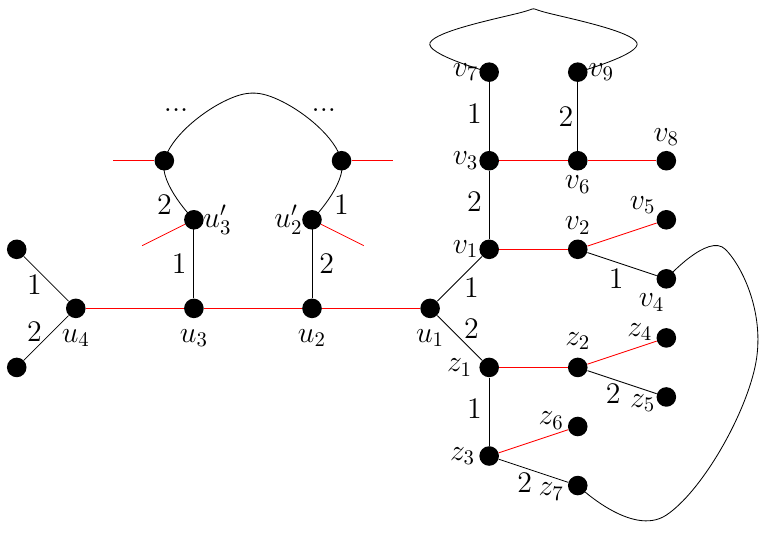} \hspace{10mm}
  \includegraphics[scale=0.6]{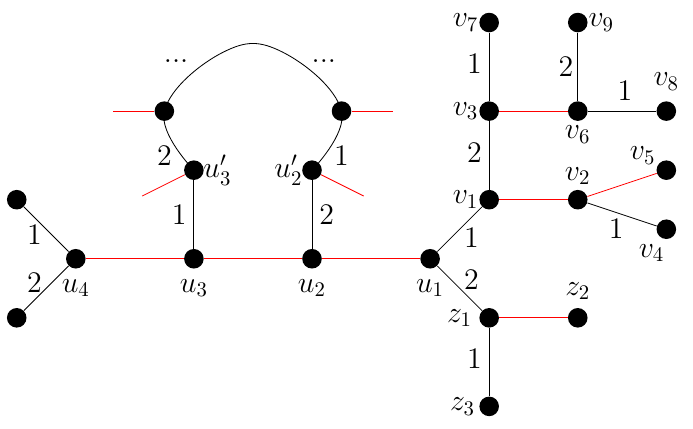} 
\caption{The structure outside a $P_4$.}\label{C_7}
\end{center}
\end{figure}

\begin{lemma}\label{3pathoutside}
Up to symmetry, $v_3v_6, v_2v_5 \in \hat{G}$, $v_3v_7 \in M_1$, $v_2v_4 \in M_1$, $v_6v_8 \in M_1$ and $v_6v_9 \in M_2$. (See Figure~\ref{C_7} right picture)
\end{lemma}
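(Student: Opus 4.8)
The plan is to determine the local picture in three successive stages — first around $v_2$, then around $v_3$, then around $v_6$ — each stage using the output of the previous one. The ingredients are the structural prohibitions already proved: no $K_{1,3}$ joined to a $P_4$ (Lemma~\ref{3star3path}), no $P_4$ joined to the middle vertex of a $P_3$ (Lemma~\ref{3path2path}), no two $K_{1,3}$s joined (Lemma~\ref{3stars}), no path with four or more edges in $\hat G$ (Claim~\ref{nopath4}), the Remark after Claim~\ref{noc_1} that a $3$-vertex of $\hat G$ has its closed neighbourhood as a component, Lemma~\ref{3-edges} itself, and the interior structure of a $P_4$ from Observation~\ref{3pathmiddlevertices} and Lemma~\ref{3pathmiddlestructure}; these are combined with the extremality of $M_1\cup M_2$: maximality forbids an $M_i$-augmenting path relative to $G-M_{3-i}$ (a path alternating between $\hat G$-edges and $M_i$-edges with both ends $M_i$-unsaturated), and, for a modification that keeps $|M_1\cup M_2|$ maximum, condition (a) forbids lowering $|E(H)|$ while condition (b) forbids keeping $|E(H)|$ fixed and lowering the number of $P_4$s of $\hat G$. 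Since a recolouring changes $H$ only through the symmetric difference of the red edge set (distances in $G$ do not move), $|E(H)|$ is controlled by comparing, for the single red edge destroyed and the single one created, the red edges within $G$-distance $2$ of each.

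I would first make one reduction: reversing the $P_4$ $u_1u_2u_3u_4$ merely interchanges the two arms attached at its ends, and by Observation~\ref{3pathmiddlevertices} exactly one of the edges $u_2u_2'$, $u_3u_3'$ lies in $M_2$; so, up to this relabelling, assume $u_2u_2'\in M_2$, which makes $u_2$ an $M_1$-unsaturated vertex and identifies $u_2'$ with the first interior vertex $w_1$ of the $M_1\cup M_2$-path from $u_2$ to $u_3$, whence $w_1w_1'\in\hat G$. \textbf{Stage 1 ($v_2$).} Since $v_1$ has $\hat G$-degree exactly $1$ (as in the setup), its $\hat G$-component $C$ carries $v_1v_2$ with $v_1$ a leaf. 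If $v_2$ has $\hat G$-degree $3$, then $C=N[v_2]$ is a $K_{1,3}$ joined through its leaf $v_1$ to the $P_4$ $u_1u_2u_3u_4$ by $u_1v_1\in M_1$, contradicting Lemma~\ref{3star3path}. If $v_2$ has $\hat G$-degree $2$ with $C$ a $P_4$, the size-preserving swap $M_1\mapsto(M_1\setminus\{u_1v_1\})\cup\{u_1u_2\}$ (legal because $u_2$ is $M_1$-unsaturated) makes $u_1v_1$ red and attaches it at $v_1$ to $C$, creating a red path with four edges, contradicting Claim~\ref{nopath4}. If $v_2$ has $\hat G$-degree $1$, so $v_1v_2$ is a $P_2$-component, the same swap turns $\{P_4\colon u_1u_2u_3u_4,\ P_2\colon v_1v_2\}$ into $\{P_3\colon u_1v_1v_2,\ P_3\colon u_2u_3u_4\}$, and a direct count — it destroys the five edges of $H$ from $u_1u_2$ to $u_2u_3,u_3u_4,v_1v_2,z_1z_2,w_1w_1'$, and creates at $u_1v_1$ at most the edges to $v_1v_2,u_2u_3,z_1z_2$ and to the at most two red edges incident to $v_3$ — shows $|E(H)|$ does not increase, so (a), or else (b), is violated. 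Hence $v_2$ has $\hat G$-degree $2$ with $C=P_3$ equal to $v_1v_2v_5$, so $v_2v_5\in\hat G$ and $v_2v_4\in M_1\cup M_2$; finally $v_2v_4\in M_1$, for otherwise $v_2$ is $M_1$-unsaturated and $v_2v_1,v_1u_1,u_1u_2$ is an $M_1$-augmenting path, contradicting maximality.

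\textbf{Stage 2 ($v_3$).} Since $v_1v_3\in M_2$, $v_3$ has $\hat G$-degree $1$ or $2$ ($3$ is impossible since $v_1v_3$ is not red, and $0$ would force two $M_1$-edges at $v_3$). Suppose it is $2$, so $v_3v_6,v_3v_7\in\hat G$. By Stage 1, $v_2$ has no $M_2$-edge, so the size-preserving swap $M_2\mapsto(M_2\setminus\{v_1v_3\})\cup\{v_1v_2\}$ is legal; afterwards $v_1v_3$ is red, making $v_3$ a $3$-vertex of $\hat G$ for the new (still maximum) pair, with $v_1$ a leaf of $N[v_3]$. If $v_6$ or $v_7$ carries another red edge, the $\hat G$-component of $v_3$ then has at least four edges, contradicting Lemma~\ref{3-edges}; otherwise $N[v_3]$ is a $K_{1,3}$-component joined through its leaf $v_1$ to the (untouched) $P_4$ $u_1u_2u_3u_4$ by $u_1v_1\in M_1$, contradicting Lemma~\ref{3star3path}. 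So $v_3$ has $\hat G$-degree $1$; relabel so $v_3v_6\in\hat G$, and then $v_3v_7$, being black, lies in $M_1$ (its $M_2$-edge being $v_1v_3$). \textbf{Stage 3 ($v_6$).} With $v_3v_6\in\hat G$ and $v_3$ a leaf of the component $D\ni v_6$, apply the same swap $M_2\mapsto(M_2\setminus\{v_1v_3\})\cup\{v_1v_2\}$: then $D$ acquires the red edge $v_1v_3$, so if $v_6$ has $\hat G$-degree $3$, or $\hat G$-degree $2$ with $D$ a $P_4$, or $v_8$ or $v_9$ carries another red edge, $D$ gets at least four edges, contradicting Lemma~\ref{3-edges}. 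The one surviving possibility — $v_6$ of $\hat G$-degree $2$ as the middle of a $P_3$ $v_3v_6v_8$ with $v_6v_9\in M_1\cup M_2$ — is ruled out by Lemma~\ref{3path2path} if $v_6v_9$ leads into a $P_4$, and otherwise by a further swap/count argument of the Stage 1 type. Hence $v_6$ has $\hat G$-degree $1$, its black edges $v_6v_8,v_6v_9$ lie one in $M_1$ and one in $M_2$, and we relabel so $v_6v_8\in M_1$, $v_6v_9\in M_2$.

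The step I expect to be the main obstacle is the handful of residual cases that survive all the structural prohibitions — chiefly the $P_2$-component case in Stage 1 and the $P_3$-middle case in Stage 3 — where no augmenting path and no length/degree violation is available and one must instead compare the two values of $|E(H)|$ (and the two $P_4$-counts) exactly. This demands a complete inventory of the red edges within $G$-distance $2$ of the single created and single destroyed red edge, which is where the precise structure of the $M_1\cup M_2$-path through $u_2$ and $u_3$ (Observation~\ref{3pathmiddlevertices}, Lemma~\ref{3pathmiddlestructure}) is needed; it is also where one must separately dispose of the degenerate configurations in which two of the named vertices $u_i,v_j,z_k$ coincide — each such coincidence either creates a short cycle, contradicting Lemma~\ref{3-edges}, or collapses onto a configuration already forbidden.
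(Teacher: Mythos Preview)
Your Stages 1 and 2 are correct and track the paper closely (the paper uses a direct augmenting move for the ``$v_3$ has $\hat G$-degree $2$'' case, while you route it through a swap plus Lemma~\ref{3star3path}; this is fine, since that lemma's proof only uses maximality of $|M_1\cup M_2|$).

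The gap is in Stage 3, precisely in the ``surviving possibility'' you flag: $v_6$ has $\hat G$-degree $2$, say $v_6v_8\in\hat G$ and $v_6v_9\in M_1\cup M_2$. Neither of your two proposed remedies works. First, after the swap $M_2\mapsto(M_2\setminus\{v_1v_3\})\cup\{v_1v_2\}$ you cannot invoke Lemma~\ref{3path2path}: its proof uses condition (a), and you have not verified that the swapped pair still minimizes $|E(H)|$. Second, a ``Stage 1 type'' count fails in the relevant direction: the swap turns the two $P_3$s $v_1v_2v_5$ and $v_3v_6v_8$ into a $P_2$ and a $P_4$, so the $P_4$ count goes \emph{up}, and the $|E(H)|$ comparison is indeterminate (the new edge $v_1v_3$ picks up $v_6v_8$ and the unknown red edges at $v_7$, while the destroyed edge $v_1v_2$ loses the unknown red edges at $v_4$ and $v_5$). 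So you get no violation of (a) or (b).

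The paper's argument here is genuinely different from anything in your outline: once one shows (as you do not) that $v_6v_9\in M_2$ and that $v_6v_9$, $v_2v_4$, $v_3v_7$ all lie in the same path-component of $G[M_1\cup M_2]$, the paper applies the entire preceding analysis \emph{symmetrically to the $z$-arm at $u_1$}, obtaining that $z_2z_5$ lies in that same component as well. This forces three leaves $z_2,v_2,v_6$ into a single path-component of $G[M_1\cup M_2]$, which is impossible. Your proposal never touches the $z$-side, and I do not see how to close this case without it.
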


\begin{proof}
By Lemma~\ref{3star3path}, $v_2v_5$ and $v_2v_4$ cannot be both in $\hat{G}$. By Observation~\ref{3pathmiddlevertices}, we may assume $u_2u_2' \in M_2$ and $u_3u_3' \in M_1$. If $v_2v_4, v_2v_5 \in M_1 \cup M_2$, then we delete $u_1v_1$ from $M_1$ and add $u_1u_2$ to $M_1$. The number of edges in $H$ will not increase (even when both $v_3v_6, v_3v_7 \in \hat{G}$) and the number of $P_4$s in $\hat{G}$ is dropped by one, which is a contradiction. Thus, we may assume $v_2v_4 \in M_1 \cup M_2$ and $v_2v_5 \in \hat{G}$. We claim $v_3v_7 \in M_1 \cup M_2$, since otherwise we can delete $v_1v_3$ from $M_2$ and $u_1v_1$ from $M_1$, and add $u_1u_2$ to $M_1$ and $v_1v_2, v_1v_3$ to $M_1,M_2$ or $M_2, M_1$ (depending on whether $v_2v_4$ is in $M_1$ or $M_2$). This is a contradiction since the new matchings $M_1', M_2'$ satisfy $|M_1' \cup M_2'| > |M_1 \cup M_2|$. Thus, $v_3v_7 \in M_1$. By a similar switching argument, we must have $v_2v_4 \in M_1$. Furthermore, $v_2v_4, v_3v_7$ are in the same component in $G[M_1 \cup M_2]$, since otherwise we switch edges between $M_1, M_2$ for the component where $v_2v_4$ is located in $G[M_1 \cup M_2]$. Then we delete $u_1v_1$ from $M_1$ and add $u_1u_2, v_1v_2$ to $M_1$ to obtain two new matchings $M_1', M_2'$ satisfy $|M_1' \cup M_2'| > |M_1 \cup M_2|$, which is a contradiction. 

At last, we show $v_6v_8, v_6v_9 \in M_1 \cup M_2$. If both $v_6v_8, v_6v_9 \notin M_1 \cup M_2$, then we delete $v_1v_3$ from $M_2$ and add $v_3v_6, v_1v_2$ to $M_2$ to obtain a new matching $M_2'$ such that $|M_1 \cup M_2'| > |M_1 \cup M_2|$, which is a contradiction. Thus, we may assume one of $v_6v_8$ and $v_6v_9$ is in $M_1 \cup M_2$, say $v_6v_9 \in M_1 \cup M_2$ and $v_6v_8 \notin M_1 \cup M_2$. By a similar switching argument, we know $v_6v_9 \in M_2$. Furthermore, $v_6v_9$ must be in the same component with $v_1v_3$ and $v_2v_4$ in $G[M_1 \cup M_2]$, since otherwise we can switch matching edges to make $v_6v_9 \in M_1$, which is a contradiction. Let $N(z_2) = \{z_1, z_4, z_5\}$ and $N(z_3) = \{z_1, z_6, z_7\}$. By symmetry, exactly one of $z_2z_4$ and $z_2z_5$ is in $\hat{G}$ and exactly one of $z_3z_6$ and $z_3z_7$ is in $\hat{G}$. Say $z_2z_4,z_3z_6 \in \hat{G}$. By symmetry again, we know $z_2z_5, z_3z_7 \in M_2$ and $z_2z_5$ is in the same component with $z_3z_7$ in $G[M_1 \cup M_2]$ (say $G_1'$). However, this is a contradiction since we have three leaves $z_2, v_2, v_6$ in the same component $G_1'$ such that $\Delta(G_1') \le 2$ (See Figure~\ref{C_7} left picture).
\end{proof}

We consider $G':=G[M_1 \cup M_2]$ and claim $v_1v_3$ and $v_2v_4$ must be in the same component.

\begin{lemma}\label{3pathoutside-2}
$v_1v_3$ and $v_2v_4$ must be in the same component of $G'$ and $v_1v_2v_4 \cdots v_7v_3v_1$ forms an odd cycle.
\end{lemma}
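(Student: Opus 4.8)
The plan is to mimic the switching arguments already used in Lemma~\ref{3pathoutside}, but now pushing the contradiction further down the structure we have built. From Lemma~\ref{3pathoutside} we know $v_3v_7 \in M_1$, $v_2v_4 \in M_1$, $v_6v_9 \in M_2$, and $v_1v_3 \in M_2$, and that $v_2v_4$, $v_3v_7$, and $v_6v_9$ all lie in one common path-component of $G':=G[M_1\cup M_2]$. First I would suppose for contradiction that $v_1v_3$ lies in a \emph{different} component of $G'$, say $C$, than the component $C'$ containing $v_2v_4$. Since $C$ is a path or a cycle, I would switch the $M_1$- and $M_2$-roles of all edges in $C$; this turns $v_1v_3$ into an $M_1$-edge while leaving $v_2v_4\in M_1$, $v_3v_7\in M_1$, $v_6v_9\in M_2$ untouched (they are in $C'\neq C$), and leaves $u_1v_1\in M_1$, $u_1z_1\in M_2$ untouched as well (checking that $u_1v_1$ is not in $C$: since $u_1v_1\in M_1$ and it would need to share a component with $v_1v_3\in M_2$ through the vertex $v_1$ — so actually $u_1v_1$ \emph{is} in $C$; I need to be careful here and instead switch so that the delete/add step still works). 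The cleaner route: after the (possible) switch I delete $u_1v_1$ from $M_1$, delete $v_1v_3$ from (the switched) $M_2$, and re-add the edges $u_1u_2$, $v_1v_2$ to $M_1$ and $v_3v_6$ to $M_2$, using $v_6v_9\in M_2$ having been arranged to not conflict; this should yield $M_1', M_2'$ with $|M_1'\cup M_2'| > |M_1\cup M_2|$, the desired contradiction. Hence $v_1v_3$ and $v_2v_4$ lie in the same component of $G'$.

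Next I would identify that common component explicitly. We have the $M_1$-edges $v_2v_4$ and $v_3v_7$, the $M_2$-edges $v_1v_3$ and $v_6v_9$, together with the red (non-matching) edges $v_1v_2$, $v_2v_5$, $v_3v_6$ forming, with $u_1v_1\in M_1$ and the path $u_1u_2u_3u_4$, the configuration of Figure~\ref{C_7}. Since $G'$ has maximum degree $2$, and $v_1v_3, v_2v_4, v_3v_7, v_6v_9$ are all forced into the same component, that component together with the red edges $v_1v_2$, $v_2v_5$, $v_3v_6$, $v_6$--$v_3$ etc. closes up: tracing $v_1 \xrightarrow{\text{red } v_1v_2} v_2 \xrightarrow{M_1\ v_2v_4} v_4 \xrightarrow{\text{red?}} \cdots \xrightarrow{} v_7 \xrightarrow{M_1\ v_3v_7} v_3 \xrightarrow{M_2\ v_1v_3} v_1$ returns to the start. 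I would check (using Lemma~\ref{3star3path} and Lemma~\ref{3path2path} to control the neighbourhoods of $v_4, v_5, v_6, v_7$, as partly done inside Lemma~\ref{3pathoutside}) that the only consistent closure makes $v_1v_2v_4\cdots v_7v_3v_1$ a single cycle, alternating appropriately between red and $M_1\cup M_2$ edges except that one vertex is incident to two red edges, which forces the cycle length to be odd. The parity claim comes from counting: an even alternating cycle would give a proper interchange that contradicts maximality exactly as in the earlier claims, so oddness is what survives.

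The main obstacle I expect is bookkeeping the component memberships correctly when performing the switch — specifically, making sure that the edges I rely on keeping fixed ($v_2v_4$, $v_3v_7$, $v_6v_9$, and the path edges $u_2u_2'$, $u_3u_3'$) are genuinely in a component disjoint from the one being switched, and that after the switch the newly added red edges $u_1u_2$, $v_1v_2$, $v_3v_6$ are simultaneously legal in their assigned matchings (no shared endpoint with a same-matching edge). This requires carefully re-deriving, from Lemmas~\ref{3pathoutside} and~\ref{3pathoutside-2}'s hypotheses, which of $v_4,\dots,v_9$ is the vertex of red-degree two and then verifying the interchange is conflict-free at every one of the handful of vertices involved. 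Once that casework is pinned down, the contradiction with the maximality of $|M_1\cup M_2|$ (or, if edge-count is preserved, with minimality condition (b) on the number of $P_4$s in $\hat G$) closes the argument, and the forced closure of the red/matching edges into the stated cycle, with the parity argument, finishes the lemma.
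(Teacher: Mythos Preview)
Your switching argument chooses the wrong component and runs into matching conflicts you do not resolve. You switch the component $C$ containing $v_1v_3$; since $u_1v_1\in M_1$ shares the vertex $v_1$ with $v_1v_3\in M_2$, the edge $u_1v_1$ also lies in $C$ (as you yourself notice mid-argument), so after the switch your instruction ``delete $u_1v_1$ from $M_1$'' is already inconsistent. Worse, your ``cleaner route'' adds $v_1v_2$ to $M_1$, but $v_2$ still carries $v_2v_4\in M_1$ (you never switched the component $C'$ containing it), so $M_1'$ is not a matching; and adding $v_3v_6$ to $M_2$ conflicts with $v_6v_9\in M_2$, which you have not actually ``arranged'' away. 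The paper switches the \emph{other} component, the one $G_1'$ containing $v_2v_4$: after that switch $v_2v_4\in M_2$ while $u_1v_1\in M_1$ is untouched, and one simply deletes $u_1v_1$ from $M_1$ and adds $u_1u_2, v_1v_2$ to $M_1$---a net gain of one matching edge with no need to involve $v_3v_6$ at all.

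There is in fact an even shorter route you overlook: inside the proof of Lemma~\ref{3pathoutside} it is already shown that $v_2v_4$ and $v_3v_7$ lie in a common component of $G'$, and since $v_1v_3$ and $v_3v_7$ share the vertex $v_3$ they are automatically in the same component; this gives the first assertion with no switching whatsoever. For the odd-cycle claim, your parity sketch (``an even alternating cycle would give a proper interchange'') is too vague. The concrete reason is that $v_2$ has degree~$1$ in $G'$ (its other incident edges $v_1v_2$ and $v_2v_5$ are red), so the $G'$-path from $v_2$ begins with $v_2v_4\in M_1$, alternates, and reaches $v_1$ along $v_1v_3\in M_2$; a path that starts in $M_1$ and ends in $M_2$ has even length, and closing it with the single red edge $v_1v_2$ yields an odd cycle.
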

\begin{proof}
Suppose $v_1v_3$ and $v_2v_4$ are not in the same component of $G'$. Say $v_2v_4$ is in the component $G_1'$. We switch the $M_1$ and $M_2$ edges in $G_1'$ and then delete $u_1v_1$ from $M_1$ and add $u_1u_2, v_1v_2$ to $M_1$ to obtain new matchings $M_1'$ and $M_2'$ such that $|M_1' \cup M_2'| > |M_1 \cup M_2|$, which is a contradiction.
\end{proof}

By Lemma~\ref{3pathoutside} and~\ref{3pathoutside-2}, the upper right corner of Figure~\ref{C_7} (right picture) shows a fixed structure whenever we have a $P_4$ in $\hat{G}$: the endpoint $u_1$ of the $P_4$ is adjacent to three odd cycles $u_2u_2'\ldots u_3'u_3$ (through red edge $u_1u_2$), $v_1v_3v_7\ldots v_4v_2$ (through black edge $u_1v_1$), and $z_1z_2\dots z_3$ (through black edge $u_1z_1$), such that each of the odd cycles has exactly one red edge ($u_2u_3, v_1v_2$ and $z_1z_2$, respectively), and the edges incident to vertices on the cycles but outside of the cycles are all red. The structure is symmetry if we interchange the colors among the three edges $u_1v_1, u_1u_2, u_1z_1$ incident to $u_1$. We shall call $u_1$ to be a {\em central vertex}. Since we may interchange the colors between $u_2u_3$ and any black edge on the cycle $u_2u_2'\ldots u_3'u_3$ to obtain a new red $P_4$, each vertex incident to the odd cycles is a central vertex. Thus we obtain the structure of $G$ if there is a red $P_4$ (See Figure~\ref{C_7.5}). Furthermore, we show there is no $P_4$. 


\begin{figure}[ht]
\begin{center}
  \includegraphics[scale=0.38]{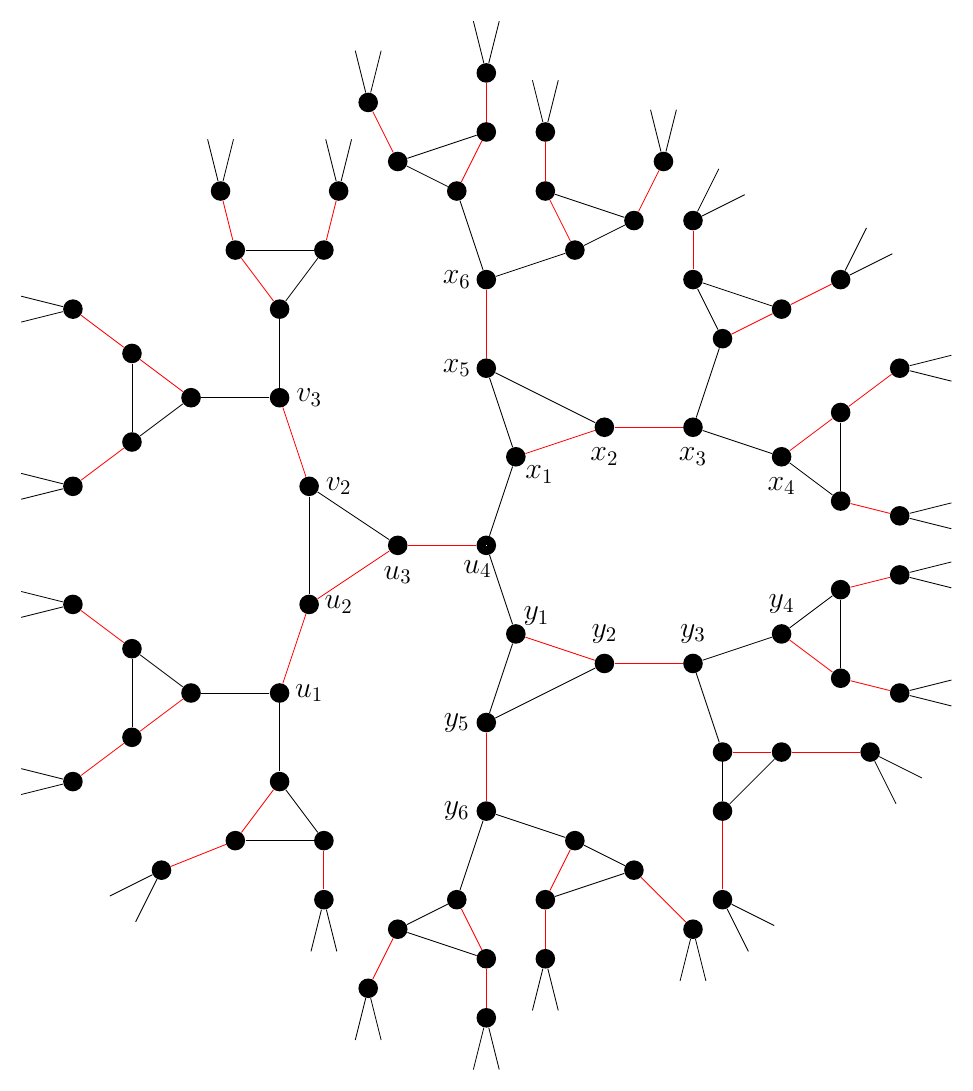} 
\caption{Structure of the graph if there is a $P_4$.}\label{C_7.5}
\end{center}
\end{figure}

\begin{lemma}\label{p43cycle}
There is no $P_4$.
\end{lemma}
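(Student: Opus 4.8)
The plan is to assume toward a contradiction that $\hat{G}$ contains a $P_4$, say $u_1u_2u_3u_4$, and to push the structural analysis of Lemmas~\ref{3pathoutside} and~\ref{3pathoutside-2} to its global conclusion. Those two lemmas already force the local configuration of Figure~\ref{C_7}: the endpoint $u_1$ is a \emph{central vertex}, lying off three pairwise distinct odd cycles, each of which carries exactly one red edge (with the remaining edges of each cycle alternating in $M_1$ and $M_2$), and every edge leaving a cycle (other than the one running to $u_1$) is red. Using (i) the colour-symmetry at a central vertex -- one may permute which of its three incident edges is red, which lies in $M_1$, and which lies in $M_2$ -- together with (ii) the fact that the unique red edge of each odd cycle may be moved to any edge of that cycle by swapping it with an adjacent black edge (this produces a new red $P_4$), one concludes that every vertex incident to these cycles is itself a central vertex of exactly the same type. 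Iterating, the structure propagates through the whole connected graph and yields the global picture of Figure~\ref{C_7.5}: $V(G)$ splits into the vertices lying on a family of pairwise vertex-disjoint odd cycles together with a set of central vertices, each cycle vertex having a single off-cycle edge (to a central vertex), each central vertex having one edge into each of three cycles, with a completely prescribed red/$M_1$/$M_2$ pattern on every cycle.

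Given this rigidity, the second step is to contradict the extremal choice of $M_1\cup M_2$, exactly as in the proofs of Lemmas~\ref{3path2path} and~\ref{3pathoutside}. I would take the red $P_4$ $u_1u_2u_3u_4$ and perform a recolouring along an alternating path that keeps $M_1\cup M_2$ of maximum size but destroys this $P_4$: push the red colour off the $P_4$ by exchanging a red $P_4$-edge with an incident matching edge at $u_1$ (or $u_4$) and then re-alternating $M_1\leftrightarrow M_2$ along the now-forced stretch of the adjacent odd cycle. Using the global structure one verifies that in the resulting colouring $|E(H_{M_1'\cup M_2'})|\le|E(H)|$, so that condition~(a) is preserved, while the number of $P_4$'s in $G-M_1'-M_2'$ strictly decreases -- contradicting condition~(b); and if the swap instead strictly decreases $|E(H)|$, that already contradicts condition~(a). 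A viable alternative is to use the rigidity to write down a genuine $(1^2,2^4)$-edge-colouring of $G$ outright -- keep $M_1,M_2$, and $4$-colour the short red components cyclically along each odd cycle, with the long cycles guaranteeing that red components receiving the same colour are at line-graph distance at least three -- which directly contradicts that $G$ is a counterexample.

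The main obstacle, and the step requiring the most care, is the passage from the local configuration to the global one: one must rule out that two of the three odd cycles meeting a central vertex coincide, or that two such cycles share vertices at small distance, or that the propagated pattern fails to close up consistently, since any such degeneracy would break both the count of $P_4$'s and the line-graph distance estimates used in the recolouring. This is where connectivity, the bound $|V(G)|>70$, and the minimality in condition~(a) are invoked to eliminate the small exceptional cases. Once the global structure is clean, step two is routine but lengthy case checking.
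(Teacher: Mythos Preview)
Your first paragraph -- propagating the local picture of Lemmas~\ref{3pathoutside} and~\ref{3pathoutside-2} to the global structure of Figure~\ref{C_7.5} (a set of odd cycles, each with a single red edge, whose off-cycle edges lead to central vertices) -- is correct and is exactly what the paper does in the paragraph preceding the lemma.

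The divergence, and the gap, is in your second step. The paper does \emph{not} recolour, and it does not produce a $(1^2,2^4)$-colouring directly. Instead it gives a four-line double count: with $n_r,n_b$ the red and black edges incident to central vertices, the central-vertex side gives $n_b=2n_r$, while the cycle side gives $n_b\le \sum_C 1$ and $n_r\ge\sum_C(|C|-1)$ (since on each odd cycle every vertex not incident to the unique red cycle-edge already has both matching colours on its two cycle-edges, forcing its off-cycle edge to be red). Hence $\sum_C 1\ge 2\sum_C(|C|-1)$, impossible for $|C|\ge 3$. No degeneracy analysis, no use of $|V(G)|>70$, no extremal conditions~(a)--(d) are needed here.

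Your recolouring plan, by contrast, runs into the very symmetry you identified. If you swap the red edge $u_1u_2$ with the black edge $u_1v_1$, then $u_1v_1$ becomes red while $v_1v_2$ and $v_2v_5$ are already red, so $u_1v_1v_2v_5$ is a new red $P_4$; you have destroyed one $P_4$ and created another, leaving the count unchanged. The same happens for any local swap at a central vertex or along a cycle: the rigid structure is invariant under these moves, which is precisely why the paper abandons recolouring here and resorts to counting. Your alternative (write down a colouring of $H$ directly) might be salvageable, but as stated it is far too vague to assess -- and in any case the counting argument is strictly simpler than anything you outline.
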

\begin{proof}
We count the number of red edges (called $n_r$) and the number of black edges (called $n_b$) incident with the central vertices. Clearly, $n_b=2n_r$, since each central vertex is incident with two black edges and one red edge.  On the other hand, each involved odd cycle $C$ gives at least $|C|-1$ red edges and at most one black edge. It follows that $n_b\le \sum_C 1$ and $n_r\ge \sum_C (|C|-1)$. Therefore, we have $$\sum_C 1\ge n_b=2n_r\ge 2\sum_C (|C|-1),$$
which is a contradiction as $|C|\ge 3$.
\end{proof}

We now turn our attention to $P_3$s and $P_2$s. We first show that if two $P_3$s are joined by an edge in $M_1 \cup M_2$ via the middle vertex then the graph is determined.

\begin{figure}[ht]
\begin{center}
  \includegraphics[scale=0.55]{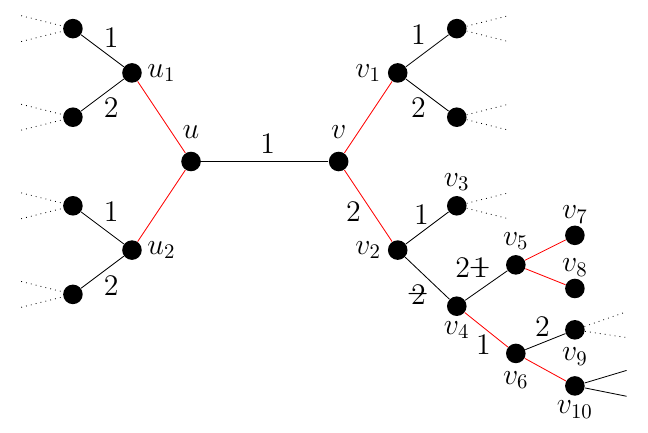}
  \includegraphics[scale=0.55]{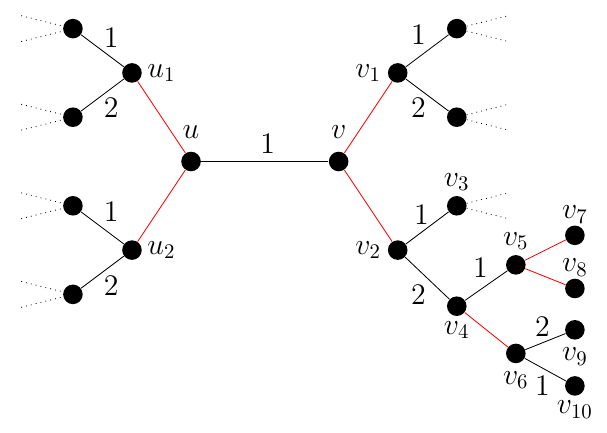}
\caption{Two $P_3$s cannot be joined by an edge in $M_1 \cup M_2$ via the middle vertex.}\label{C_9}
\end{center}
\end{figure}

\begin{lemma}\label{two2pathsmiddle}
If there is a pair of $P_3$s in $\hat{G}$ whose middle vertices are adjacent by an edge in $M_1 \cup M_2$, then the graph $\hat{G}$ contains only one such pair and the structure of the graph $G$ is fixed.
\end{lemma}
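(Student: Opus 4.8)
The plan is to proceed exactly as in the earlier lemmas: assume such a pair of $P_3$s exists and use the maximality of $M_1\cup M_2$ together with the minimality conditions (a)--(d) to pin down the entire local structure, then iterate the forced structure around the graph until the whole of $G$ is determined. Concretely, let $P = v_1vv_2$ and $P' = w_1ww_2$ be the two $P_3$-components of $\hat{G}$ with $vw \in M_1\cup M_2$, say $vw\in M_1$. First I would examine the four edges $vv_1, vv_2, ww_1, ww_2 \in E(\hat G)$ and the edges leaving $v_1,v_2,w_1,w_2$: by Lemma~\ref{3-edges} the endpoints $v_1,v_2,w_1,w_2$ each lie in a component with at most three edges, and in fact (since $P,P'$ are their own components) the two pendant edges at each of $v_1,v_2,w_1,w_2$ going outside $P\cup P'$ must be black, for otherwise an augmenting swap of the kind used repeatedly above (delete $vw$ from $M_1$, add $vv_1$ or similar to a matching) would enlarge $M_1\cup M_2$. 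Moreover, just as in Claim~\ref{nopath4} and Observation~\ref{3pathmiddlevertices}, these black edges must alternate between $M_1$ and $M_2$ around each $P_3$, and the relevant black edges must lie in a common path-component of $G[M_1\cup M_2]$ — otherwise a component-switch followed by an augmenting add creates a contradiction with maximality.

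The second step is to feed this into the minimality hypotheses. Condition (d) is tailored precisely to this configuration (``paired $P_3$s joined by an edge in $M_1\cup M_2$ through their middle vertices''), so the argument should show that any local modification which does not decrease $|E(H)|$ and does not increase the number of $P_4$s or triangles in $H$ but strictly decreases the number of such paired-$P_3$ configurations yields a contradiction. This is the mechanism by which uniqueness (``only one such pair'') will be forced: if there were two such pairs somewhere in $\hat G$, one could reconfigure one of them — e.g.\ move $vw$ out of the matching and reattach the two $P_3$s along their middle edges $vv_1$ (or $vv_2$) and $ww_1$ (or $ww_2$) — to kill that pair without creating new defects, contradicting (d). The delicate bookkeeping is to verify that the swap does not secretly create a $P_4$ (which would be blocked by (b)) or a new triangle in $H$ (blocked by (c)); this is where the distance-$\le 2$ adjacency rule defining $H$ has to be checked carefully against the new red edges.

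The third step, once the pair is unique, is to grow the forced structure outward. The black edges at $v_1,v_2,w_1,w_2$ live in path-components of $G[M_1\cup M_2]$, whose other endpoints carry red edges (since $\Delta(G[M_1\cup M_2])\le 2$), and those red edges sit in basic components ($K_{1,3}$, $P_4$, $P_3$, or $P_2$); Lemmas~\ref{3stars}, \ref{3star3path}, \ref{3path2path} and~\ref{p43cycle} severely restrict what these can be — in particular there is no $P_4$ anywhere, no two $K_{1,3}$s joined through $M_1\cup M_2$, etc. Combined with the alternation and same-component constraints, each such path-component of $G[M_1\cup M_2]$ must close up into an odd cycle carrying exactly one red edge, exactly as in the $P_4$-analysis culminating in Figure~\ref{C_7.5}. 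Propagating this around the (connected, cubic) graph $G$ then leaves only finitely many possibilities, and a counting argument in the spirit of Lemma~\ref{p43cycle} — comparing the number of red and black edges incident to the ``central'' vertices of these cycles — pins $G$ down to the unique graph depicted in Figure~\ref{C_9}.

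The main obstacle I anticipate is the third step: verifying that the forced local picture genuinely propagates to a \emph{global} structure on a connected cubic graph without running into an inconsistency prematurely, and then checking that the resulting fixed graph is small (so that, being on $\le 70$ vertices, it is excluded by hypothesis, or else can be $(1^2,2^4)$-colored directly). The augmenting/switching arguments themselves are routine variations of what has already appeared; the real care is in the combinatorial accounting that rules out all but one graph, and in making sure every reconfiguration respects all four minimality conditions simultaneously.
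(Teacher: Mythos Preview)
Your plan has the right general shape (augmenting swaps plus the minimality conditions to pin down local structure, then propagate), but two of your concrete mechanisms are wrong, and they are exactly the places where the real work lies.

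First, the structure you predict in step~3 is not what actually occurs. You expect the black edges leaving the paired $P_3$s to close up into odd cycles each carrying one red edge, ``exactly as in the $P_4$-analysis.'' They do not. What the paper shows is quite different: starting from an endpoint $v_2$ of one of the paired $P_3$s, one of its black edges (say $v_2v_4\in M_2$) leads to a vertex $v_4$ with one red edge $v_4v_6$ (forming a $P_2$-component) and one black edge $v_4v_5\in M_1$; and $v_5$ is forced to be the \emph{middle} vertex of a (non-paired) $P_3$. This uses condition~(c) on triangles in $H$ in an essential way: the alternative local pictures are eliminated not by enlarging $M_1\cup M_2$ but by showing that a swap keeps $|E(H)|$ constant while killing a triangle. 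The global picture is then obtained by noticing that the swap $vv_2\leftrightarrow v_2v_4$ turns $v_4v_5$ into the new central edge of a paired-$P_3$ configuration, so the same local analysis applies around $v_5$; iterating this ``slide'' determines $G$. No odd cycles, and no counting argument \`a la Lemma~\ref{p43cycle}.

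Second, your uniqueness argument in step~2 does not work. You propose to delete $vw$ from the matching and ``reattach the two $P_3$s along their middle edges $vv_1$ and $ww_1$'' so as to destroy the pair and invoke~(d). But $v_1$ (and likewise $w_1$) is an endpoint of a $P_3$-component of $\hat G$, so both of its non-red edges are already in $M_1\cup M_2$; you cannot put $vv_1$ into either matching without first unsaturating $v_1$, and you have given no mechanism for doing that without creating a $P_4$ or dropping $|M_1\cup M_2|$. The paper's actual uniqueness argument is different and rather pretty: once the local structure is known, any two paired-$P_3$ configurations must be at distance at least~$4$ (measured between their central edges), but the slide move above always produces a new paired-$P_3$ configuration at distance one less from the second pair; iterating drives the distance below~$4$, a contradiction. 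Condition~(d) is not used for uniqueness at all --- it only appears later in the discharging, to ensure the eight branch-$P_2$s are genuinely distinct.
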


\begin{proof}
Suppose $v_1vv_2$ and $u_1uu_2$ are two $P_3$s in $\hat{G}$ such that $uv \in M_1 \cup M_2$.  Let $N(v_2) = \{v, v_3, v_4\}$ and $N(v_4) = \{v_2, v_5, v_6\}$. We may assume $uv, v_2v_3 \in M_1$ and $v_2v_4 \in M_2$ (See Figure~\ref{C_9}). 

\begin{claim}\label{v4v5v6}
The edges $v_4v_5$ and $v_4v_6$ cannot be both in $\hat{G}$. 
\end{claim} 

\begin{proof}
Suppose both $v_4v_5$ and $v_4v_6$ are in $\hat{G}$. We delete $v_2v_4$ from $M_2$ and add $vv_2$ to $M_2$. The number of edges in $H$ is dropped two, which is a contradiction.
\end{proof}

Since $v_2v_4 \in M_2$, $v_4v_5, v_4v_6  \notin M_2$. By Claim~\ref{v4v5v6}, we may assume $v_4v_5 \in M_1$ and $v_4v_6 \in \hat{G}$. Let $N(v_5) = \{v_4, v_7, v_8\}$ and $N(v_6) = \{v_4, v_9, v_{10}\}$.

\begin{claim}
$v_6$ cannot be the center of a $K_{1,3}$. Furthermore, $v_6v_9$ or $v_6v_{10}$ must be in $M_2$.
\end{claim}

\begin{proof}
Suppose $v_6$ is the center of a $K_{1,3}$. Since $v_2v_4 \in M_2$, $v_4v_5 \in M_1$. We delete $v_2v_4$ from $M_2$ and add $vv_2, v_4v_6$ to $M_2$ to obtain a new matching $M_2'$ such that $|M_1 \cup M_2'| > |M_1 \cup M_2|$. This is a contradiction. By a similar switching argument, we know $v_6v_9$ or $v_6v_{10}$ must be in $M_2$.
\end{proof}

Therefore, either $v_6v_9 \in M_2$ and $v_6v_{10} \in \hat{G}$ or $v_6v_9 \in M_2$ and $v_6v_{10} \in M_1$. We show that only the latter case is possible and both of $v_5v_7,v_5v_8 \in \hat{G}$ (See Figure~\ref{C_9} right picture).

\begin{claim}
We must have $v_6v_9 \in M_2$ and $v_6v_{10} \in M_1$. Furthermore, $v_5v_7,v_5v_8 \in \hat{G}$.
\end{claim}

\begin{proof}
Suppose $v_6v_9 \in M_2$ and $v_6v_{10} \in \hat{G}$. We first show one of $v_5v_7$ and $v_5v_8$ must be in $M_2$. Suppose both of $v_5v_7$ and $v_5v_8$ are in $\hat{G}$. We delete $v_2v_4$ from $M_2$ and $v_4v_5$ from $M_1$, and then add $v_4v_6$ to $M_1$ and $vv_2, v_4v_5$ to $M_2$. This is a contradiction since the new $M_1'$ and $M_2'$ satisfy $|M_1' \cup M_2'| > |M_1 \cup M_2|$ (See Figure~\ref{C_9}). Thus, we may assume $v_5v_7 \in M_2$ and $v_5v_8 \in \hat{G}$. We delete $v_2v_4$ from $M_2$ and add $vv_2$ to $M_2$. The number of edges in $H$ does not change but the number of triangles in $H$ is dropped by one, which is a contradiction. Thus, we must have $v_6v_9 \in M_2$ and $v_6v_{10} \in M_1$.

We next show $v_5v_7,v_5v_8 \in \hat{G}$. Suppose one of $v_5v_7$ and $v_5v_8$ is in $M_2$. We delete $v_2v_4$ from $M_2$ and add $vv_2$ to $M_2$. The number of edges in $H$ is dropped by one, which is a contradiction. 
\end{proof}

By symmetry, the structure outside of $v_3$ must be the same to the structure outside of $v_4$. By symmetry again, the structure outside of $v_1,u_1,u_2$ must be the same with the structure outside of $v_2$. We call each of $v_1,v_2,u_1,u_2$ a {\em special vertex}. Since we can switch the edges $vv_2$ and $v_2v_4$ (then $v_4v_5$ becomes the central edge of two adjacent $P_3$s), we know the structure outside of $v_7,v_8,v_9,v_{10}$. By applying similar argument to $v_3$ and other vertices that are symmetric to $v_3$, we can determine the structure of the graph (See Figure~\ref{C_9} right picture; vertices are renamed in order to better show the structure ). 

At last, we show that there can be at most one pair of $P_3$s in the graph. Suppose $x_1u_1w_1$ and $y_1v_1z_1$ is a pair of $P_3$s linked by $u_1v_1$ and there is another pair of $P_3$s $x_1'u_1'w_1'$ and $y_1'v_1'z_1'$ linked by $u_1'v_1'$ in the graph. We define the distance between two such pairs to be the distance between their central edges $u_1v_1$ and $u_1'v_1'$. By previous Claims (See Figure~\ref{C_9} right picture), we know the distance is at least four. We may also assume the shortest path between $u_1v_1$ and $u_1'v_1'$ go through the edge $z_1v_6$. We switch the edges $v_1z_1$ and $z_1v_6$. Then the distance between two pairs of $P_3$s is dropped by one. This switching process can be applied in a sequence so that the distance between two pairs of $P_3$s is dropped to be below four. This is a contradiction with the fact that the distance must be at least four (See Figure~\ref{C_9}).
\end{proof}

We may assume that two $P_3$s in $\hat{G}$ cannot be joined by an edge in $M_1 \cup M_2$ via the middle vertex. However, they can still be joined in the types of end-end and end-middle vertices. We next show that an end vertex of a $P_3$ cannot be joined to two other $P_3$s by edges in $M_1 \cup M_2$ via the middle vertices.

\begin{figure}[ht]
\begin{center}
  \includegraphics[scale=0.7]{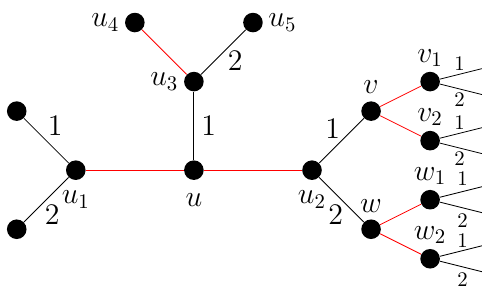}
  \includegraphics[scale=0.6]{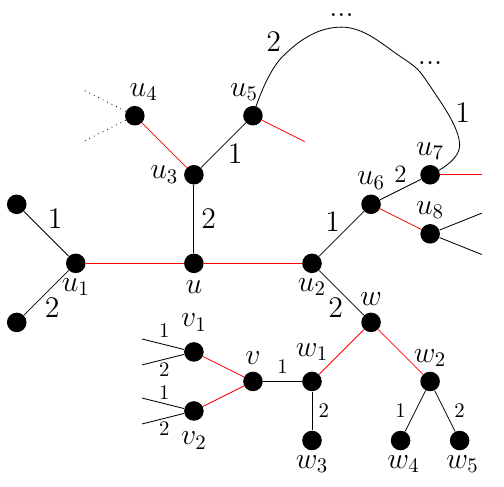}
  \includegraphics[scale=0.5]{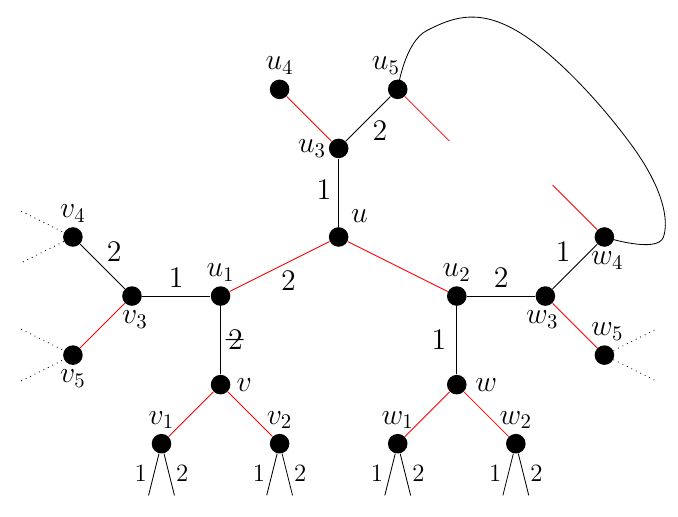} 
\caption{Three 2-paths.}\label{C_10}
\end{center}
\end{figure}

\begin{lemma}\label{three2-paths}
If an end vertex of a $P_3$ in $\hat{G}$ is already joinned to the middle vertex of a $P_3$ in $\hat{G}$ by an edge in $M_1 \cup M_2$, then this end vertex cannot be joined with another $P_3$ in $\hat{G}$ by an edge in $M_1 \cup M_2$.
\end{lemma}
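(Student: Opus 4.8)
The plan is to run the same augmenting/switching template used throughout this section, organised as a case analysis. Let $A=vxy$ be the $P_3$ with end vertex $v$ from the hypothesis, so $N_G(v)=\{x,p,q\}$ with $vx\in E(\hat G)$; since $v$ has degree one in $\hat G$ and degree three in $G$, the two remaining edges $vp,vq$ lie in $M_1\cup M_2$ and, sharing the vertex $v$, lie one in each class — say $vp\in M_1$ and $vq\in M_2$. By hypothesis $p$ is the middle vertex of a $P_3$, say $B=p_1pp_2$, so $pp_1,pp_2\in E(\hat G)$ are all of $p$'s edges besides $vp$, and each of $p_1,p_2$ is incident with exactly two edges of $M_1\cup M_2$, one in each class. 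Suppose for contradiction that $vq$ also joins $v$ to a $P_3$, say $C$, distinct from $A$ and $B$; then $q$ is either the middle vertex or an end vertex of $C$.

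First I would dispose of the degenerate situations — some vertex of $A$, $B$, $C$ coinciding, or two of these $P_3$'s sharing an outside neighbour — separately: in each such case a short walk in $G$ yields either a cycle in $\hat G$, or one of the configurations already excluded by Lemmas~\ref{3-edges}--\ref{two2pathsmiddle}, or an immediate enlargement of $M_1\cup M_2$. With those out of the way I would split into two main cases according to whether $q$ is the middle or an end vertex of $C$, and within each into sub-cases by the $M_1/M_2$-colours of the (at most four) black edges incident with $p_1,p_2$ and with the vertices of $C$, together with whether certain of those black edges lie in a common component of $G':=G[M_1\cup M_2]$.

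In a typical sub-case the move is: delete one of $vp,vq$ from its matching, and, using that $v$ and $p$ (resp.\ $v$ and $q$) become free in that class, add the red edge $vx$ at $v$ and a red edge $pp_i$ at $p$ (resp.\ a red edge at $q$); whenever an endpoint of a red edge we wish to insert is blocked by a black edge, first switch $M_1\leftrightarrow M_2$ along the path-component of $G'$ containing that black edge, which is legitimate unless that component is already forced to carry two ``leaf'' endpoints by the configuration — and three leaves in one path-component of $G'$ is impossible since $\Delta(G')\le 2$, the standard contradiction of Lemmas~\ref{3-edges}, \ref{3star3path}, \ref{3pathoutside}. Each such move either strictly increases $|M_1\cup M_2|$, contradicting its maximality, or leaves $|M_1\cup M_2|$ fixed while strictly decreasing one of the quantities minimised in (a)--(d): most often it destroys the end--middle join of $A$ and $B$ and thereby drops the number of edges of $H$ (as in Lemma~\ref{3path2path}), but in the colour-balanced sub-cases it instead drops the number of $P_4$'s, of triangles in $H$, or of middle-joined $P_3$-pairs. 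In one sub-case the switch turns $C$ into a $P_2$ with a new $P_3$ appearing, which is precisely the middle panel of Figure~\ref{C_10}.

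The hard part is bookkeeping rather than a single idea: one must check, across all colour patterns and all distributions of the relevant black edges among components of $G'$, that the re-routing genuinely produces two disjoint matchings \emph{and} that it does not create a previously-excluded configuration (two $K_{1,3}$'s joined by an $M_1\cup M_2$ edge, a $K_{1,3}$ joined to a $P_4$, a $P_4$ joined to the middle of a $P_3$, or a middle-joined pair of $P_3$'s), for otherwise the claimed contradiction would be vacuous. Organising the case tree as (position of $q$ in $C$) $\times$ (colour pattern) $\times$ (component pattern in $G'$) and discharging each leaf by one of the four moves above is what carries the argument through.
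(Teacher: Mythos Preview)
Your outline has the right shape — split on whether $q$ is the middle or an end of $C$, then use swap/augment moves together with the ``three leaves in one path-component of $G'$'' obstruction — but it stays at the level of a plan and never executes the moves, and the ``typical move'' you describe is not the one that actually carries either case. The paper's argument is short and concrete, and uses only maximality of $|M_1\cup M_2|$ and condition~(a); conditions (b)--(d) play no role here, and there is no need to verify that the re-routing avoids previously excluded configurations.

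Concretely: in Case~1 ($q$ is the middle of $C$) the paper does \emph{not} delete one and add two. By the symmetry between $p$ and $q$ one may assume the single black edge at the middle $x$ of $A$ lies in the same class as $vp$; then simply delete $vq$ from $M_2$ and add $vx$ to $M_2$. This is a one-for-one swap, no component switching is needed, and a direct count shows $|E(H)|$ drops, contradicting (a). In Case~2 ($q$ is an end of $C$, with middle $q'$) the crucial step you do not pin down is \emph{which} three leaves are used and why the reduction forces two of them together. One first argues that the black edge at $x$ must share the class of $vp$ (else the Case~1 swap already works), and in fact must lie in the same $G'$-component as $vp$ (else switch that component and again do the Case~1 swap). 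Now the three middle vertices $x,p,q'$ of $A,B,C$ are each leaves of $G'$, and two of them ($x$ and $p$) have just been forced into a single path-component; hence $q'$ lies in a different component, that component may be switched freely, and then deleting $vq$ and adding $vx$ and $qq'$ strictly enlarges $|M_1\cup M_2|$. Your sketch mentions the three-leaf device but does not identify these reductions, without which one cannot name the switchable component.
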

\begin{proof}
Let $u_1uu_2$, $v_1vv_2$, and $w_1ww_2$ be three $P_3$s such that $u_2v \in M_1$. Let $N(u) = \{u_1, u_2, u_3\}$ and $N(u_3) = \{u, u_4, u_5\}$. We show $u_2$ cannot join to another $P_3$.

\textbf{Case 1:} $u_2$ is joined to $w_1ww_2$ by the edge $u_2w \in M_2$. We may assume $uu_3, u_2v \in M_1$, $u_3u_5, u_2w \in M_2$, and $u_3u_4 \in \hat{G}$. We delete $u_2w$ from $M_2$ and add $uu_2$ to $M_2$. This is a contradiction since the number of edges in $H$ is dropped by one (See Figure~\ref{C_10} left picture).

\textbf{Case 2:} $u_2$ is joined to $w_1ww_2$ by the edge $u_2w_1 \in M_2$. Let $N(w_1) = \{u_2, w, w_4\}$, $N(w) = \{w_1, w_2, w_3\}$. We know $w_1w_4 \in M_1$. We claim $uu_3 \in M_1$, since if $uu_3 \in M_2$ then we delete $vu_2$ from $M_1$ and add $uu_2$ to $M_1$. The number of edges in $H$ is dropped by one and it is a contradiction. By the same reason, $uu_3$ must be in the same component with $vu_2$ in $G[M_1 \cup M_2]$. Since $w, v, u$ are three degree one vertices in $G[M_1 \cup M_2]$, they cannot be in the same component in $G[M_1 \cup M_2]$. Therefore, we switch the matching edges between in $M_1$ and $M_2$ for the component $ww_3$ is located, and then delete $w_1u_2$ from $M_2$ and add $uu_2, ww_1$ to $M_2$ to obtain a new $M_2'$. This is a contradiction since $|M_1 \cup M_2'| > |M_1 \cup M_2|$ (See Figure~\ref{C_final} right picture).
\end{proof}

\begin{figure}[ht]
\begin{center}
  \includegraphics[scale=0.5]{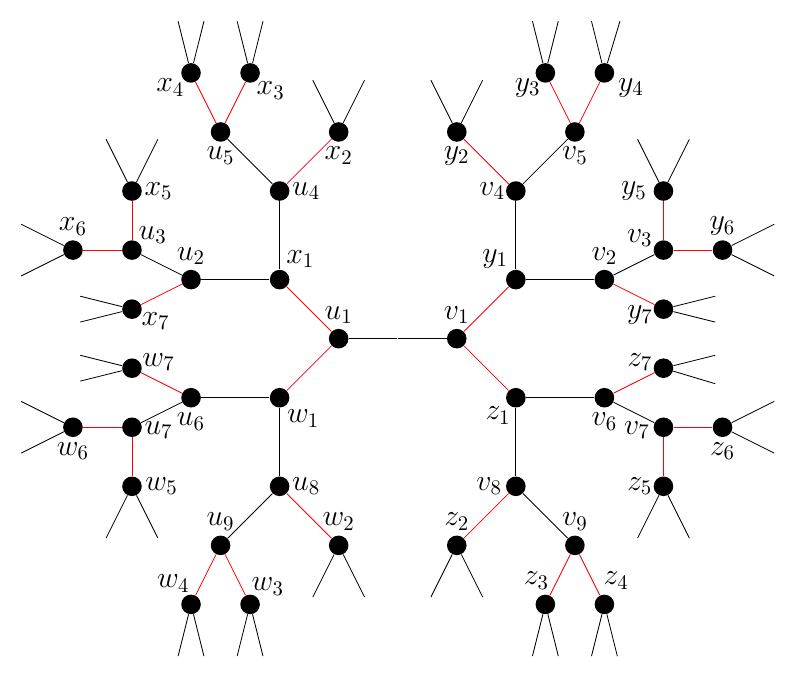} \hspace{10mm}
    \includegraphics[scale=0.7]{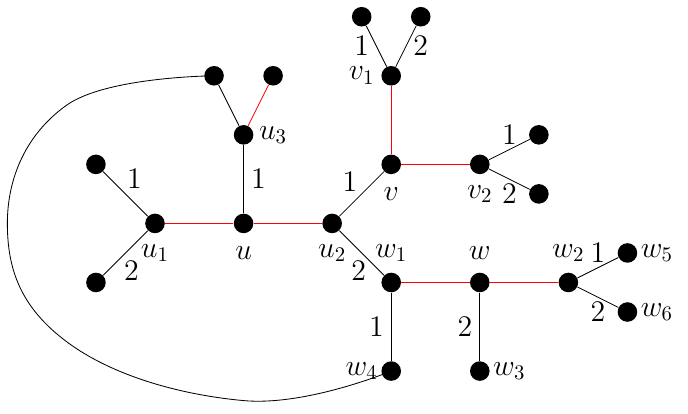}
\caption{Structure of the graph if there are $P_3$s connected by their middle vertices.}\label{C_final}
\end{center}
\end{figure}

Let $v_1vv_2$ be a $P_3$ in $\hat{G}$ and $N(v) = \{v_1, v_2, w_1\}$. We know $v_1vv_2$ cannot be adjacent to another $P_3$ in $\hat{G}$  by an edge in $M_1 \cup M_2$ via the middle vertices. We also know $v$ cannot be adjacent to a $P_4$ in $\hat{G}$. Thus, $v$ is either adjacent to a $K_{1,3}$, a $P_2$, or a $P_3$ in $\hat{G}$ via the endpoint by an edge in $M_1 \cup M_2$. We focus on the case when $v$ is adjacent to a $P_3=w_1ww_2$ in $\hat{G}$  via the endpoint, say $w_1$. Let $N(w_1) = \{v, w, w_3\}$ and $N(w) = \{w_1, w_2, u_2\}$. Similarly, $w$ is either adjacent to a $K_{1,3}$, $P_2$, or a $P_3$ in $\hat{G}$ via the endpoint by an edge in $M_1 \cup M_2$. We show that the latter case is impossible.

\begin{lemma}\label{three2-paths-2}
Let $v_1vv_2$ and $w_1ww_2$ be $P_3$s in $\hat{G}$ such that $vw_1\in M_1\cup M_2$. Then $w$ cannot be adjacent to a $P_3$  in $\hat{G}$ by an edge in $M_1 \cup M_2$.
\end{lemma}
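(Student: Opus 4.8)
The strategy mirrors the pattern established in Lemmas~\ref{3path2path}--\ref{three2-paths}: assume for contradiction that $w$ is adjacent via an edge $f\in M_1\cup M_2$ to a $P_3$ in $\hat G$, and derive a contradiction either by augmenting $M_1\cup M_2$ (violating maximality), or by a local modification that keeps $|E(H)|$ fixed while strictly decreasing one of the secondary minimization parameters (number of $P_4$'s, number of triangles in $H$, or number of paired $P_3$'s joined through middle vertices). The first move is to fix orientations: since $vw_1\in M_1\cup M_2$, say $vw_1\in M_1$, and since $w_1$ has degree $3$ with $w_1v,w_1w\in E(G)$ where $w_1w$ is red (the $P_3$ edge), the third edge $w_1w_3$ lies in $M_2$; similarly the edge $vv_1$ or $vv_2$ forces the color pattern around $v$. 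Then $wu_2\in M_1\cup M_2$ (by Lemma~\ref{3-edges} applied to the red $P_3$ $w_1ww_2$, the edge from $w$ leaving the path is black), and $wu_2$ must be the specific color not equal to the color of $w_1w_3$ forced by the induced-matching obstruction, i.e. we are in the situation that $w$ sees a black edge $wu_2$ and we posit a $P_3$, say $a_1a a_2$, attached to $w$ through $u_2$.

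\textbf{Case split.} A $P_3$ can be attached to $w$ via $u_2$ only through an endpoint of that $P_3$ (by Lemma~\ref{3path2path} it cannot be via the middle vertex of the $P_3$ — wait, that lemma is about $P_4$; the relevant restriction is Lemma~\ref{two2pathsmiddle}, which we may assume gives no middle--middle $P_3$ pair, so attachment is at an endpoint). But also, $u_2$ is an endpoint-neighbor configuration, so we are in exactly the ``three $P_3$'s in a row'' shape: $v_1vv_2$—$w_1ww_2$—$a_1aa_2$, linked by $vw_1$ and $wu_2$ through endpoints. The natural contradiction: delete $wu_2$ from its matching class and add $ww_1$ (or $ww_2$) — no, $ww_1,ww_2$ are red already; instead delete $wu_2$ and add $u_2$'s forced black edge into play, or better, delete $vw_1$ from $M_1$ and add $vw$-type swaps. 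Concretely I expect the winning move is: switch along the path-component of $G[M_1\cup M_2]$ containing $wu_2$ to free up a vertex, then delete $wu_2$ and add one of $ww_2$, $ww_1$ reconfigured so that the red $P_3$ $w_1ww_2$ is shortened to a red $P_2$ while a new matching edge absorbs $u_2$; this strictly drops $|E(H)|$ by $2$ (two red edges disappear), contradicting maximality of $M_1\cup M_2$ and minimality of $|E(H)|$. If the augmentation is blocked by two of the three pendant vertices $v,w,a$ (say $a$) lying in the same path-component of $G[M_1\cup M_2]$, we use the ``three leaves in a path-component'' contradiction exactly as in Lemma~\ref{three2-paths}, Case 2.

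\textbf{Main obstacle.} The delicate point is handling the sub-case where the naive edge swap does \emph{not} decrease $|E(H)|$ but only keeps it level — then I must verify that one of the secondary parameters (count of $P_4$'s, triangles in $H$, or middle--middle $P_3$ pairs) strictly decreases, which requires tracking precisely which red edges are created and destroyed and what new adjacencies appear in $H$ (recall $E(H)$ records $d_G$-distance-$\le 2$ pairs among red edges, so replacing one red edge by another can change triangle counts in $H$). I would organize this by: (i) ruling out $f$ black vs.\ red at the start via Lemma~\ref{3-edges}; (ii) using the componentwise $M_1/M_2$-swap trick to assume a convenient color pattern; (iii) performing the deletion--addition and reading off that at least one of maximality, $|E(H)|$, $\#P_4$, $\#$triangles, $\#$middle-pairs is violated. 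The bookkeeping in step (iii), especially confirming the triangle count in $H$ behaves as claimed, is where the real work lies; the augmenting-path arguments themselves are routine given the earlier lemmas.
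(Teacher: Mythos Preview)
Your high-level instincts are right: the proof does hinge on a ``three leaves in one path-component'' observation and a local edge swap that drops $|E(H)|$. But the concrete swap you propose is never pinned down, and the candidates you do write are wrong. You suggest deleting $wu_2$ and adding $ww_1$ (or $ww_2$) to a matching; this fails because $w_1$ already carries both colors ($vw_1\in M_1$ and $w_1w_3\in M_2$), so neither matching can absorb $ww_1$ unless you first remove $vw_1$ or $w_1w_3$. Your fallback ``delete $vw_1$ and add $vw$-type swaps'' does not make sense either, since $vw\notin E(G)$. (Also, your aside ``no, $ww_1,ww_2$ are red already'' reads as if red edges cannot be added to a matching---that is exactly backwards.) The quantitative claim ``drops $|E(H)|$ by $2$'' is also off; the correct drop is $1$.

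The paper's proof does the bookkeeping you are missing. First it forces structure at the far $P_3$: with $u_2w\in M_2$, $u_2u_6\in M_1$, $uu_3\in M_2$, it shows that $uu_3$ and $u_2u_6$ lie in the \emph{same} component of $G[M_1\cup M_2]$ (else switch that component and replace $u_2w$ by $uu_2$ in $M_2$, which already drops $|E(H)|$ by one). This puts the two leaves $u$ and $w$ into one path-component, so the third leaf $v$ lies in a different component $G_2'$; switching $G_2'$ if needed gives $vw_1\in M_1$ while $wu_2$ stays in $M_2$. Now the clean swap is: delete $vw_1$ from $M_1$ and add $ww_1$ to $M_1$. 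This is valid because $w_1$ has been freed in $M_1$ and $w$ carries only $M_2$. The red edge $ww_1$ is replaced by the red edge $vw_1$; a direct count shows the unique lost $H$-edge is $ww_1\text{--}uu_2$, so $|E(H)|$ drops by exactly one. None of your secondary parameters ($P_4$ count, triangles in $H$, middle--middle pairs) are needed here---the ``main obstacle'' you flag does not arise.
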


\begin{proof}
Suppose $w$ is adjacent to a $P_3=u_1uu_2$ in $\hat{G}$ in $M_1 \cup M_2$.  By Lemma~\ref{two2pathsmiddle},  we may assume that $wu_2\in M_1\cup M_2$. Let $N(u) = \{u_1, u_2, u_3\}$, $N(u_2) = \{u, w, u_6\}$, $N(u_3) = \{u, u_4, u_5\}$, and $N(u_6) = \{u_2, u_7, u_8\}$ (See Figure~\ref{C_10} middle picture). We may assume $uu_3, u_2w \in M_2$ and $u_2u_6 \in M_1$. By Lemma~\ref{two2pathsmiddle}, we may assume $u_3u_4 \in \hat{G}$ and $u_3u_5 \in M_1$. By Lemma~\ref{three2-paths}, we may assume $u_6u_7 \in M_2$ and $u_6u_8 \in \hat{G}$. We consider $G' = G[M_1 \cup M_2]$. We know $uu_3$ and $u_2u_6$ must be in the same component, say $G_1'$ of $G'$. Otherwise, we can switch the edges between $M_1$ and $M_2$ for the component of $G'$ where $uu_3$ is in. Then we delete $u_2w$ from $M_2$ and add $uu_2$ to $M_2$. This is a contradiction since the number of edges in $H$ is dropped by one. 

Since $v, w, u$ are all degree one vertices in $G'$, $v$ cannot be in the same component with $w$ and $u$. Let the component $v$ is in be called $G_2'$. We can guarantee that $vw_1 \in M_1$ and $w_1w_3 \in M_2$ (possibly by switching edges in $G_2'$). Then we delete $vw_1$ from $M_1$ and add $ww_1$ to $M_1$. This is a contradiction since the number of edges in $H$ is dropped by one.
\end{proof}

\begin{lemma}\label{three2-paths-3}
The two leaves of a $P_3$ in $\hat{G}$  cannot be adjacent to the middle vertices of two other $P_3$s in $\hat{G}$  at the same time.
\end{lemma}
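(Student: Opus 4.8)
The plan is to mimic the switching/minimality arguments already used for Lemmas~\ref{three2-paths} and~\ref{three2-paths-2}, but now applied simultaneously on \emph{both} sides of a central $P_3$. Let $u_1uu_2$ be a $P_3$ in $\hat G$ with $N(u)=\{u_1,u_2,u_3\}$, and suppose for contradiction that both leaves $u_1$ and $u_2$ are joined by edges in $M_1\cup M_2$ to the middle vertices of two further $P_3$s, say $v_1vv_2$ via $u_1v\in M_1\cup M_2$ and $w_1ww_2$ via $u_2w\in M_1\cup M_2$. Since $u_1u,u_2u\in\hat G$ are red, the two black edges at $u_1$ are $u_1v$ and $u_1u_3$-type edges; more precisely $N(u_1)=\{u,v,u_1'\}$ and $N(u_2)=\{u,w,u_2'\}$ with $u_1v,u_1u_1',u_2w,u_2u_2'\in M_1\cup M_2$. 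By the proof of Lemma~\ref{nopath4}-style reasoning the colors at $u$ force, up to symmetry, $u_1v\in M_1$ and $u_2w\in M_1$ (if they had the same relation to $uu_1,uu_2$ we would add $u_1u$ or $u_2u$ to the other matching and gain an edge), so in fact we may take $u_1v, u_1u_1'\in$ opposite matchings and likewise at $u_2$; spell this out exactly as in the earlier lemmas.

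The key step is a local colour-switch at the two ends that decreases $|E(H)|$ (or, if $|E(H)|$ is unchanged, decreases one of the secondary parameters (b)--(d) in the minimality order). Concretely, I would delete one of $u_1v, u_2w$ from its matching and add $u_1u$ (respectively $u_2u$) to that matching; this turns the red path $u_1uu_2$ into a black edge on one end and detaches $v$ (or $w$) from $u$, dropping the number of red edges near $u$ and hence, after checking the induced-matching distances, strictly decreasing $|E(H)|$. If a single-sided switch happens to leave $|E(H)|$ constant (because some edge incident to $v$ was not in $M_1\cup M_2$, as in Lemma~\ref{three2-paths-2}), then I would first pass to $G'=G[M_1\cup M_2]$, argue that $u$, $v$, $w$ cannot all lie in the same path-component of $G'$ (they are all degree-one there, and a path has only two leaves), switch matchings on whichever component $v$ or $w$ lies in so that the relevant edge has the ``good'' colour, and only then perform the augmenting move to reach a contradiction with maximality of $M_1\cup M_2$.

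The main obstacle is bookkeeping the interaction between the two sides: because $u_1$ and $u_2$ are at distance two through $u$, a switch performed to fix the $v$-side may disturb the colour pattern forced on the $w$-side (the components of $G'$ containing $u_1u_1'$, $u_2u_2'$ and the two detached leaves can coincide), so one must verify that the two ends can be handled in sequence without undoing earlier gains. I expect to resolve this exactly as in Lemma~\ref{three2-paths}: treat the ``both relevant edges already have the good colour'' case by a direct augmentation (delete $u_1v$, add $u_1u$, then the freed colour at $u_1$ lets $v$ be recoloured, strictly increasing $|M_1\cup M_2|$), and reduce every other case to it by at most two component-switches in $G'$, each of which is legitimate because the leaves $u,v,w$ of $G'$ force the components apart. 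Once the contradiction is obtained, the lemma follows, and combined with Lemmas~\ref{two2pathsmiddle}--\ref{three2-paths-2} it pins down how $P_3$s may attach to one another, which is what the subsequent discharging needs.
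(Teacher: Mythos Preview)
Your high-level strategy is the paper's: swap one of $u_1v,\,u_2w$ for the adjacent edge $uu_i$ in a matching to drop $|E(H)|$, using the three-leaf argument in $G'=G[M_1\cup M_2]$ (with leaves $u,v,w$) to make at least one side legal. But the claimed drop in $|E(H)|$ is never established, and this is where the actual content lies. You do not name the $H$-edge that the swap destroys. In the paper the pivot is the \emph{third} neighbour $u_3$ of the centre $u$: the edge $uu_3$ is the unique black edge at $u$, and (paired middle vertices having been excluded by Lemma~\ref{two2pathsmiddle}) exactly one further edge $u_3u_4$ at $u_3$ is red. Before the swap the red edge $uu_2$ is at distance two from $u_3u_4$; after replacing $u_2w$ by $uu_2$ in the matching, the new red edge $u_2w$ is at distance three from $u_3u_4$, so precisely that $H$-edge is lost while every other adjacency is preserved under $uu_2\mapsto u_2w$. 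Your sentence ``dropping the number of red edges near $u$ and hence \ldots\ strictly decreasing $|E(H)|$'' is not correct as written: the swap leaves the number of red edges unchanged and is degree-neutral near $v$ and $w$; without $u_3u_4$ there is nothing to count.

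Your colour-forcing is also attached to the wrong anchor. The edges $uu_1,uu_2$ are red, so ``the same relation to $uu_1,uu_2$'' does not parse; what governs everything is the colour of $uu_3$. The paper fixes $uu_3\in M_1$, shows that $u_2w\in M_1$ and lies in the same $G'$-component as $uu_3$ (else the $u_2$-side swap already drops $|E(H)|$), and only then applies the three-leaf argument to put $u_1v$ in a different component, switch there to get $u_1v\in M_2$, and perform the $u_1$-side swap. You have these pieces in roughly the right order but floating free of $uu_3$, so as written a component-switch on the ``$v$ or $w$'' side could flip the colour of $uu_3$ and invalidate the very swap you intend.
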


\begin{proof}
Let $u_1uu_2$, $v_1vv_2$, and $w_1ww_2$ be three $P_3$s in $\hat{G}$ such that $u_1v, u_2w \in M_1 \cup M_2$. Let $N(u) = \{u_1, u_2, u_3\}$, $N(u_3) = \{u, u_4, u_5\}$, $N(u_1) = \{u,v,v_3\}$, $N(v) = \{u_1, v_1, v_2\}$, $N(v_3) = \{u_1, v_4, v_5\}$, $N(u_2) = \{u,w,w_3\}$, $N(w) = \{u_2, w_1, w_2\}$, and $N(w_3) = \{u_2, w_4, w_5\}$ (See Figure~\ref{C_10} right picture). By Lemma~\ref{three2-paths-2}, one of $u_3u_4$ and $u_3u_5$ must be in $\hat{G}$. We may assume $u_3u_4 \in \hat{G}$. Without loss of generality, we may assume $uu_3 \in M_1$ and $u_3u_5 \in M_2$. We must have $u_2w \in M_1$ and $u_2u_3 \in M_2$ since otherwise we can delete $u_2w$ from $M_2$ and add $uu_2$ to $M_2$. This is a contradiction since the number of matching edges does not change but the number of edges in $H$ is dropped by one. Furthermore, $uu_3$ must be in the same component with $u_2w$ in $G'$ since otherwise we can switch the edges of $u_2w$ and $u_2w_3$ so that $u_2w \in M_2$ and $u_2u_3 \in M_1$ (then $uu_3$ will not change), which again will reach a contradiction. Note that $u_2w, u_3u,$ and $u_1v$ cannot be in the same component in $G'$ since $\Delta(G') \le 2$ and $v,w,u$ are all leaves. Therefore, we can assume $u_1v \in M_2$ and $u_1v_3 \in M_1$. We delete $u_1v$ from $M_2$ and add $uu_1$ to $M_2$. This is a contradiction since the number of matching edges does not change but the number of edges in $H$ is dropped by one.
\end{proof}

\section{The discharging rule and the final proof}

We only have one rule for the discharging method. We need to show~\eqref{charges} is true to obtain a contradiction.

\begin{quote}
\textbf{(R0):} Let $B_1, B_2$ be two basic components. If there is an edge $e_1 = uv \in M_1 \cup M_2$ such that $u$ is a $1$-vertex in $B_1$ and $v$ is a $2$-vertex in $B_2$, then $B_1$ gives $1$ to $B_2$.
\end{quote}

Note that a $P_2$ in $\hat{G}$ has degree $4+t$ in $H$, where $t$ is the number of $P_3$s in $\hat{G}$ whose middle vertices are adjacent to the $P_2$. Therefore, after Rule (R0), the maximum net charge for $P_2$ in $\hat{G}$ is $4+t-4.5-t=-0.5$. For other basic components, we can do a similar calculation and have the following table:
\textbf{
\begin{center}
\begin{tabular}{|c|c|c|}
\hline
    Basic Component & Total charge after R0 & Net charge after R0 \\ \hline
    $P_2$ & $4-4.5$ & $-0.5$ \\ \hline
    $P_3$ (not paired) & $9-4.5*2$ & $0$ \\ \hline
    $P_3$ (paired) & $5+5-4.5*2$ & $1$ \\ \hline
    $K_{1,3}$ & $4+4+4-4.5*3$ & $-1.5$ \\ \hline 
\end{tabular}
\end{center}
}
$$\text{Table 1: Charges for basic components.}$$

\textbf{Case 1:} There are $K_{1,3}$s and no paired $P_3$s. If we have at least two $K_{1,3}$s, then the total net charge is $-3<-2.5$. This is a contradiction since all other components are providing non-positive net charges. Thus, we may assume there is exactly one $K_{1,3}$. By Lemma~\ref{three2-paths},~\ref{three2-paths-2}, and~\ref{three2-paths-3}, we know there is a $P_2$ for every $14$ vertices or there is a $K_{1,3}$ for every 24 vertices. The former case can provide a net charge of $-0.5$ and the latter case can provide $-1.5$. Since we have more than $70$ vertices, the total net charge is less than $-2.5$.

\textbf{Case 2:} There are no paired $P_3$s and no $K_{1,3}$s. Hence, there are only $P_3$s and $P_2$s. By Lemma~\ref{three2-paths},~\ref{three2-paths-2}, and~\ref{three2-paths-3}, we know there is a $P_2$ for every $14$ vertices. Since we have more than $70$ vertices, there are at least $6$ $P_2$s in the graph. Thus, the total net charge is at most $-0.5*6 = -3<-2.5$.

\textbf{Case 3:} There is a pair of $P_3$s in $G$ that are joined by an edge in $M_1 \cup M_2$ via their middle vertices. By Lemma~\ref{two2pathsmiddle}, there is exactly one such pair and the structure of the graph is fixed. By Table 1, the two $P_3$s, $x_1u_1w_1$ and $y_1v_1z_1$, each has net charge $1$ after R0. For each $P_2$s (e.g. $v_6z_7$), it has net charge of $-0.5$ after R0 according to Table 1. Since we have more than $70$ vertices, if there are at least $10$ $P_2$s then they can provide in total $-0.5*10 = -5$ net charges after R0. This is a contradiction since in total we have the charge of at most $-5 + 2 = -3 < -2.5$. Hence, we only need to show there are at least $10$ $P_2$s.

The four branches in the left picture of Figure~\ref{C_final} starting with $x_1,y_1,z_1,w_1$ must exist. We first show the eight $P_2$s, $u_2x_7, u_4x_2, y_2v_4, y_7v_2, z_7v_6, z_2v_8, u_8w_2, u_6w_7$, in the left picture of Figure~\ref{C_final} must be distinct. Since $G$ is a subcubic graph, it is impossible that $u_2x_7 = u_4x_2$. Therefore, the only case that we have less $P_2$s is when some of the vertices from different branches coincide, say $v_2 = v_6$. However, we can switch the role of $y_1v_2$ and $y_1v_1$ to reduce the number of paired $P_3$s, which contradicts Condition (d). For a similar reason, the eight non-paired $P_3$s, $y_3v_5y_4, y_5v_3y_6, z_5v_7z_6, z_3v_9z_4, w_3u_9w_4, w_5u_7w_6, x_5u_3x_6, x_3u_5x_4$, are distinct. Since the local structure of the graph is fixed, there are only $P_2$s and $P_3$s in $G-M_1-M_2$, and the graph must grow in each direction, we can find two more $P_2$s.


\end{document}